\documentclass{article}
\usepackage[utf8]{inputenc}
\usepackage{amsmath,amssymb,dsfont,amsfonts}
\usepackage{amsthm}
\usepackage{mathrsfs}
\usepackage{graphics}
\usepackage{graphicx}
\usepackage{color}

\newtheorem{example}{Example}

\newtheorem{theorem}{Theorem}[section]
\newtheorem{proposition}[theorem]{Proposition}
\newtheorem{corollary}[theorem]{Corollary}
\newtheorem{remark}[theorem]{Remark}
\newtheorem{definition}[theorem]{Definition}

\usepackage[bottom=2cm,top=2.5cm,left=2.5cm,right=2.5cm]{geometry}

\title{The Markov Accessibility Topology and its Probabilistic Structure}
\author{Lourival Lima\footnote{Mathematics Department, Escuela Superior Politécnica del Litoral, ESPOL. Vía Perimetral 5, Guayaquil, Ecuador. E-mail: lourodri@espol.edu.ec. 
Research supported by ESPOL.} \ \ and\ \ Jorge Vielma\footnote{Mathematics Department, Escuela Superior Politécnica del Litoral, ESPOL. Vía Perimetral 5, Guayaquil, Ecuador. E-mail: jevielma@espol.edu.ec 
Research supported by ESPOL.}}

\date{}

\begin{document}

\maketitle

\begin{abstract}

 In this paper, we associate a canonical topology with the transition operator of a discrete-time Markov chain. We prove that this construction defines an Alexandroff topology, called the Markov Accessibility Topology, and subsequently establish that it coincides with the Alexandroff topology induced by the accessibility relation. This establishes a direct connection between probabilistic accessibility and order-theoretic topology, allowing several classical notions from Markov chain theory to be interpreted in topological terms. Among other results, we characterize absorbing subsets as closed sets, relate irreducibility to topological connectedness and the triviality of the topology, describe minimal open neighborhoods through accessibility and hitting probabilities, and investigate the communication quotient together with its relationship to lumpability. 
\end{abstract}

\noindent\textbf{Keywords:}
Markov chains, Alexandroff topology, accessibility relation, preorder, communication classes.

\medskip

\noindent\textbf{2020 Mathematics Subject Classification:}
Primary 54F05; Secondary 60J10, 06B15.

\vspace{0.5cm}

\section{Introduction}

Discrete-time Markov chains provide a natural mathematical framework for describing systems that evolve randomly over time. Besides their numerous applications, they have long been one of the central objects of probability theory, giving rise to a rich mathematical structure and a vast literature \cite{K.L.Chung,KemenySnell,J.R.Norris,LevinPeresWilmer,E.Seneta}. Many of the qualitative properties of a Markov chain are determined not by the actual transition probabilities, but rather by the way its states are connected through the accessibility relation. This relation identifies which states can be reached from one another after finitely many transitions and forms the basis of fundamental concepts such as communication classes, irreducibility, recurrence, and absorbing subsets \cite{J.R.Norris,LevinPeresWilmer,E.Seneta}.

Although accessibility has been extensively studied from a probabilistic perspective, it also suggests the existence of an underlying order-theoretic structure on the state space. Since Alexandroff spaces are in one-to-one correspondence with preorders \cite{ShiraziGolestani,Echi,Pajoohesh}, it is natural to ask whether the transition mechanism of a Markov chain determines an intrinsic topology reflecting its accessibility structure.

Despite this natural connection, the accessibility structure of Markov chains has rarely been investigated from a genuinely topological viewpoint. While topological methods have proved successful in several areas including dynamical systems, stochastic analysis, stochastic control, and topological structures induced by algebraic actions \cite{Echi,LimaRuffino,LimaRuffinoSouza,Mejias-Vielma-Aponte-Lima}, the topology intrinsically encoded by the transition mechanism of a Markov chain has not, to the best of our knowledge, been systematically studied. Consequently, many classical notions from Markov chain theory have not yet been interpreted through the language of topology.

The main purpose of this paper is to show that the transition mechanism of a Markov chain intrinsically determines a canonical topological structure on the state space. Rather than beginning with an order-theoretic object and invoking the classical correspondence between preorders and Alexandroff spaces, we start directly from the probabilistic dynamics of the chain. More precisely, using the one-step transition operator, we construct a family of subsets of the state space and prove that it defines an Alexandroff topology, which we call the Markov Accessibility Topology. This construction is canonical, as it is determined solely by the transition operator of the chain. Only afterwards do we prove that the accessibility relation is in fact a preorder and that the resulting topology coincides with the Alexandroff topology associated with this preorder. Thus, the connection with order-theoretic topology appears as a structural consequence of the construction rather than as its starting point.

The significance of this construction lies in the fact that the topology is intrinsic to the transition dynamics of the chain. It is not imposed externally through an \emph{a priori} order or topological structure, but instead emerges directly from the probabilistic behavior encoded by the transition operator. This natural construction establishes a bridge between probabilistic accessibility and order-theoretic topology, allowing several classical concepts from Markov chain theory to be interpreted in purely topological terms.

We prove that absorbing subsets correspond exactly to closed sets, relate irreducibility to connectedness and to the triviality of the topology, characterize minimal open neighborhoods in terms of accessibility and hitting probabilities, establish a pathwise characterization of open sets, and investigate the quotient topology induced by communication classes together with its relationship to lumpability and quotient Markov chains. These results reveal that accessibility is not merely a relation describing possible transitions between states. Rather, it determines an intrinsic topological structure governing many qualitative features of discrete-time Markov chains.

The remainder of this paper is organized as follows. Section~2 reviews the necessary background on Markov chains, preorders, and Alexandroff spaces. Section~3 introduces the Markov Accessibility Topology through a probabilistic construction, proves that it is an Alexandroff topology, and subsequently establishes that it coincides with the Alexandroff topology associated with the accessibility relation. Section~4 develops the local topological theory associated with this construction, providing topological characterizations of several classical notions from Markov chain theory, including irreducibility, absorbing subsets, hitting times, and path accessibility. Finally, Section~5 studies the quotient topology induced by communication classes and its relationship with lumpability and quotient Markov chains.

\section{Preliminaries}

This section collects the basic concepts and notation that will be used throughout the paper. For background material on Markov chains, we refer the reader to Norris \cite{J.R.Norris}, Levin, Peres and Wilmer \cite{LevinPeresWilmer}, and Kemeny and Snell \cite{KemenySnell}. For the theory of Alexandroff spaces and their connections with order-theoretic structures, we refer the reader to Shirazi and Golestani \cite{ShiraziGolestani}, Echi \cite{Echi}, Pajoohesh \cite{Pajoohesh}, and Echi and Turki \cite{EchiTurki}. These concepts provide the probabilistic and topological foundations for the results developed in the subsequent sections.

\subsection{Markov chains}

Let $(\Omega,\mathcal F,\mathbb P)$ be a probability space and let $(X_n)_{n\geq 0}$ be a stochastic process taking values in a finite or countable state space \(X\). The process \((X_n)_{n\geq0}\) is called a \emph{discrete-time Markov chain} if, for every \(n\geq0\) and every choice of states
\[
x_0,x_1,\ldots,x_n,x,y\in X,
\]
the following Markov property holds:
\[
\mathbb P(X_{n+1}=y\,|\,X_n=x,X_{n-1}=x_{n-1},\ldots,X_0=x_0)
=
\mathbb P(X_{n+1}=y\,|\,X_n=x).
\]
In other words, conditioned on the present state, the future evolution of the process is independent of its past history. The transition probabilities are defined by
\[
P(x,y)
=
\mathbb P(X_{n+1}=y\,|\,X_n=x),
\qquad x,y\in X.
\]
The matrix
\[
P=(P(x,y))_{x,y\in X}
\]
is called the \emph{transition matrix} of the chain. Since \(P\) is stochastic, we have
\[
P(x,y)\geq0,
\qquad
\sum_{y\in X}P(x,y)=1,
\]
for every \(x\in X\). For \(n\geq1\), the \(n\)-step transition probabilities are defined by
\[
P^n(x,y)
=
\mathbb P(X_n=y\,|\,X_0=x).
\]
These probabilities satisfy the Chapman--Kolmogorov equations:
\[
P^{n+m}(x,y)
=
\sum_{z\in X}
P^n(x,z)P^m(z,y),
\]
for all \(m,n\in\mathbb N_0\) and all \(x,y\in X\). A probability measure \(\mu\) on the state space \(X\) is called an
\emph{initial distribution} of the Markov chain if

\[
\mathbb P(X_0=x)=\mu(x),
\qquad x\in X.
\]
The probability law of the chain corresponding to the initial
distribution \(\mu\) will be denoted by \(\mathbb P_\mu\), and the
associated expectation by \(\mathbb E_\mu\). For \(x\in X\), the Dirac measure at \(x\) is denoted by \(\delta_x\) and is defined by
\[
\delta_x(y)=
\begin{cases}
1,& y=x,\\
0,& y\neq x.
\end{cases}
\]
Thus, under \(\mathbb P_{\delta_x}\), the chain starts from the state \(x\) with probability one. A property is said to hold \emph{\(\mathbb P_\mu\)-almost surely} if it holds with \(\mathbb P_\mu\)-probability one. Throughout the paper, the abbreviation \emph{a.s.} stands for \emph{almost surely}.

\bigskip

Let $\displaystyle{\Omega=X^{\mathbb N_0}}$ be the path space of the chain. The shift operator
\[
\theta:\Omega\to\Omega
\]
is defined by
\[
(\theta\omega)_n=\omega_{n+1},
\qquad n\ge0.
\]
More generally,
\[
(\theta_k\omega)_n=\omega_{n+k},
\qquad k,n\ge0.
\]
Thus, \(\theta_k\) removes the first \(k\) states of a trajectory and views the process from time \(k\) onward. In this context,  $(X,P)$ represents a discrete-time Markov chain with state space $X$ and transition matrix $\displaystyle{P=(P(x,y))_{x,y\in X}}$.

\bigskip

\begin{definition}
The accessibility relation on $X$ is the binary relation $\preceq$ defined by
\[
x\preceq y
\quad \text{if and only if} \quad
P^n(x,y)>0
\]
for some \(n\in\mathbb N_0\).
\end{definition}
In other words, \(x\preceq y\) whenever there exists a path from \(x\) to \(y\) that can be followed by the Markov chain with positive probability in finitely many transitions. As a consequence of the Chapman--Kolmogorov equations, the accessibility relation possesses a natural order structure. This fact will be established in Theorem~\ref{T1}.

\begin{definition}
Two states \(x,y\in X\) are said to \emph{communicate} if
\[
x\sim y
\quad \text{if and only if} \quad
x\preceq y
\ \text{and}\
y\preceq x.
\]
The equivalence classes induced by the relation \(\sim\) are called the \emph{communication classes} of the Markov chain.
\end{definition}

Communication classes capture the global accessibility structure of a Markov chain. In particular, they allow us to distinguish between chains whose states are all mutually accessible and those that decompose into several disconnected communication classes. This distinction is formalized through the notions of irreducibility and reducibility.

\begin{definition}
A discrete-time Markov chain \((X,P)\) is said to be \emph{irreducible} if all states belong to the same communication class. Equivalently,
\[
x\sim y,
\qquad
\forall\, x,y\in X.
\]
\end{definition}

While accessibility and communication describe the possibility of moving through the state space, it is equally important to identify regions from which the chain cannot escape. Such structures are captured by the notions of absorbing states and absorbing subsets. As we shall see later, these probabilistic concepts admit a natural topological interpretation within the framework of the Markov accessibility topology.

\bigskip

\begin{definition}
A state \(x\in X\) is said to be \emph{absorbing} if
\[
P(x,x)=1.
\]
Equivalently, once the chain enters the state \(x\), it remains there forever.
\end{definition}

A natural generalization of the notion of an absorbing state is that of an absorbing subset of the state space.

\begin{definition}
A subset \(C\subseteq X\) is said to be \emph{absorbing} if
\[
P(x,C)=1,
\qquad
\forall\,x\in C.
\]
Equivalently, once the chain enters \(C\), it cannot leave \(C\).
\end{definition}

\bigskip

The notion of an absorbing subset provides a natural link between the probabilistic dynamics of a Markov chain and the topology introduced in this work. Indeed, one of our main results establishes that the absorbing subsets are precisely the closed sets of the Markov accessibility topology.

\subsection{Alexandroff spaces}
The accessibility relation introduced in the previous subsection naturally leads to an order-theoretic structure on the state space. Since preorders and Alexandroff topologies are closely related, we briefly recall some basic notions concerning Alexandroff spaces that will be needed in the sequel.

\begin{definition}
A topological space \((X,\tau)\) is called an \emph{Alexandroff space} if arbitrary intersections of open sets are open.
\end{definition}

Alexandroff spaces occupy a distinguished position in topology due to their close relationship with order-theoretic structures. In particular, every preorder induces an Alexandroff topology, and conversely every Alexandroff topology determines a canonical preorder.

\bigskip

\begin{proposition}
Let $(X,\tau)$ be an Alexandroff space. Then, for every
\(x\in X\), there exists a smallest open set containing \(x\),
namely
\[
U_x
=
\bigcap\{U\in\tau:x\in U\}.
\]
\end{proposition}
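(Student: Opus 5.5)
The plan is to check directly that the set $U_x$ defined in the statement has the two required properties: it is open, and it is contained in every open set containing $x$. Only the definition of an Alexandroff space is needed, namely that arbitrary intersections of open sets are open.

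First, I verify that the family $\mathcal{U}_x=\{U\in\tau:x\in U\}$ is nonempty. Since $X\in\tau$ and $x\in X$, we have $X\in\mathcal{U}_x$. So the intersection $U_x=\bigcap\mathcal{U}_x$ is a well-defined subset of $X$, and there is no question of an empty intersection being read as the whole ambient space by convention. Since $x$ belongs to every member of $\mathcal{U}_x$, it follows that $x\in U_x$.

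Next, I show openness. Because $(X,\tau)$ is an Alexandroff space, the intersection of the arbitrary (possibly infinite) family $\mathcal{U}_x\subseteq\tau$ is again open. Hence $U_x\in\tau$, and together with $x\in U_x$ this gives $U_x\in\mathcal{U}_x$.

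Finally, I prove minimality. If $V\in\tau$ with $x\in V$, then $V\in\mathcal{U}_x$, so $U_x=\bigcap\mathcal{U}_x\subseteq V$. Thus $U_x$ is an open set containing $x$ and is contained in every open set containing $x$, so it is the smallest such set. Uniqueness is automatic: two smallest open sets containing $x$ would each contain the other, hence coincide. There is no real obstacle here. The only point needing care is the use of the Alexandroff hypothesis, which is needed because $\mathcal{U}_x$ may be infinite when $X$ is countable; in a general topological space only finite intersections of open sets are guaranteed to be open.
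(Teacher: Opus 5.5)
Your argument is correct and complete. Nonemptiness of $\mathcal{U}_x$ (via $X\in\tau$), $x\in U_x$, openness from the Alexandroff hypothesis, and minimality straight from the definition of the intersection are exactly what is needed. The paper gives no proof of its own, recording this as standard background, and yours is the standard argument.

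The one quibble is your closing remark. The proposition concerns an arbitrary Alexandroff space, so the Alexandroff hypothesis is needed whenever $\tau$ is infinite, whether or not $X$ is countable.
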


The collection
\[
\mathcal B=\{U_x:x\in X\}
\]
forms a basis for the topology \(\tau\).

\begin{definition}
Let \((X,\tau)\) be a topological space. The \emph{specialization preorder} on \(X\) is defined by
\[
x\leq_\tau y
\]
if and only if every open set containing \(x\) also contains \(y\).
\end{definition}

\bigskip

\begin{proposition}
Let $(X,\tau)$ be an Alexandroff space and let
\(x\in X\). Then
\[
\overline{\{x\}}
=
\{y\in X:x\leq_\tau y\}.
\]
\end{proposition}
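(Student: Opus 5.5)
The plan is to compare, for an arbitrary $y\in X$, the condition $y\in\overline{\{x\}}$ with the condition $x\leq_\tau y$. I would show that both reduce to a statement about the minimal open neighbourhood $U_y$ of the earlier proposition, and then obtain the two inclusions of the set identity together.

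\textbf{Step 1 (closure via neighbourhoods).} In any topological space, $y\in\overline{A}$ if and only if every open set containing $y$ meets $A$. For $A=\{x\}$, meeting $A$ means containing $x$. Hence
\[
y\in\overline{\{x\}}\iff x\in U \text{ for every } U\in\tau \text{ with } y\in U.
\]

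\textbf{Step 2 (use the Alexandroff property).} By the earlier proposition, $U_y=\bigcap\{U\in\tau: y\in U\}$ is open and is the smallest open set containing $y$. The condition in Step 1 is therefore equivalent to the single membership $x\in U_y$. One direction is immediate because $U_y$ is itself open and contains $y$. The other holds because every open $U\ni y$ contains $U_y$. This reduces the closure of a point to the basis $\mathcal B=\{U_x:x\in X\}$.

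\textbf{Step 3 (identify with the specialization relation).} It remains to recognise ``$x\in U_y$'', equivalently ``every open set containing $y$ contains $x$'', as the relation that the statement writes as $x\leq_\tau y$. The two inclusions of $\overline{\{x\}}=\{y:x\leq_\tau y\}$ then follow at once from Steps 1 and 2.

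\textbf{Main obstacle.} Step 3 is the only delicate point, and it concerns the orientation of the specialization preorder. The closure of $\{x\}$ consists of those $y$ all of whose open neighbourhoods contain $x$. So the identity holds exactly when $x\leq_\tau y$ is read as ``$x$ lies in every open neighbourhood of $y$'', i.e. $x\in U_y$.

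With the quantifiers written literally in the other order, ``every open set containing $x$ contains $y$'', the right-hand side becomes $U_x$ rather than $\overline{\{x\}}$. That is because $\{y: x\in U_y\}$ is the closure of $\{x\}$, while $\{y: y\in U_x\}=U_x$.

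I would therefore first fix the convention for $\leq_\tau$ as ``$x\leq_\tau y$ if and only if $x\in U_y$''. I would check that this reading is the one matching the later Markov application, where closed sets are absorbing and $\overline{\{x\}}$ should be the set of states accessible from $x$. The argument of Steps 1 and 2 then completes the proof, with no further calculation needed.
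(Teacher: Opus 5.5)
Your diagnosis in Step 3 is correct, and it is the key point. The paper gives no proof of this proposition; it is quoted as a standard background fact. With the definition the paper actually states ($x\leq_\tau y$ iff every open set containing $x$ also contains $y$), the identity is false.

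The paper's own two-state example already refutes it. Take the chain with $P(1,2)=P(2,2)=1$. Then $\tau_P=\{\varnothing,\{1\},X\}$ and the closed sets are $\varnothing,\{2\},X$, so $\overline{\{1\}}=X$. On the other hand, $\{1\}$ is an open set containing $1$ but not $2$, so $\{y: 1\leq_\tau y\}=\{1\}=U_1$. You should therefore present Step 3 not as choosing a convention but as correcting the stated definition, and include this counterexample so it is clear that the issue is not one of interpretation.

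With $x\leq_\tau y$ redefined as ``every open set containing $y$ contains $x$'' (equivalently $x\in U_y$), your Steps 1 and 2 form a complete proof. Your consistency check also comes out the right way. The open sets of $\tau_P$ are the down-sets of $\preceq$, so $U_y=P_P(y)=\{x: x\preceq y\}$. Hence the corrected $\leq_\tau$ is exactly the accessibility relation $\preceq$, and $\overline{\{x\}}=\{y: x\preceq y\}$ is the set of states accessible from $x$. This is what the paper's Conclusions tacitly assume when they say the specialization preorder ``is precisely the accessibility relation''. The literal definition would instead make $\leq_\tau$ the reverse of $\preceq$.

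A minor simplification: once $\leq_\tau$ is defined the corrected way, neither Step 2 nor the Alexandroff hypothesis is needed. Step 1 alone shows that $y\in\overline{\{x\}}$ iff every open neighbourhood of $y$ contains $x$. That is verbatim the corrected definition of $x\leq_\tau y$, so the identity holds in every topological space. The Alexandroff property only lets you rephrase the condition as membership in the open set $U_y$. It is also what makes the literal right-hand side equal to the open set $U_x$ in your discussion of the wrong orientation.
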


The following result is classical (see, for example, \cite{Echi,ShiraziGolestani}).

\begin{theorem}
There is a one-to-one correspondence between Alexandroff topologies on a set \(X\) and preorders on \(X\). More precisely, given a preorder \(\leq\) on \(X\), the associated
Alexandroff topology consists of all downward closed subsets of \(X\).
\end{theorem}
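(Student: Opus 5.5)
The plan is to build the two maps of the correspondence explicitly and check that they are mutually inverse. Given a preorder $\leq$ on $X$, let $\tau_\leq$ be the family of all downward closed subsets, that is, sets $U$ such that $y\in U$ and $z\leq y$ imply $z\in U$. Conversely, given an Alexandroff topology $\tau$, take its specialization preorder $\leq_\tau$, but with the orientation reversed so that it matches the statement: write $y\trianglelefteq_\tau x$ when $x\leq_\tau y$, i.e.\ when every open set containing $x$ also contains $y$. Equivalently, $y\trianglelefteq_\tau x$ iff $y\in U_x$. With the convention $x\leq_\tau y$ exactly as defined, open sets are upward closed instead; this is only a relabelling of the order, and I will fix the orientation so that open sets are the down-sets, as the statement says.

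The first step is to show that $\tau_\leq$ is an Alexandroff topology. Both $\emptyset$ and $X$ are down-sets. Moreover, membership in a down-set is a pointwise condition, so any union or any intersection of down-sets is again a down-set. This gives closure under arbitrary unions and arbitrary intersections at once. I also record the minimal open set in $\tau_\leq$: it is $U_x=\downarrow x=\{z:z\leq x\}$. This set is a down-set by transitivity and contains $x$ by reflexivity, and every down-set containing $x$ contains it.

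The second step is to check that $\trianglelefteq_\tau$ is a preorder. Reflexivity is trivial. For transitivity, suppose $z\trianglelefteq y\trianglelefteq x$. Then every open set containing $x$ contains $y$, hence contains $z$.

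The third step is the two round trips, which is where the real content lies.

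Starting from $\leq$: the induced relation is $y\trianglelefteq x$ iff $y\in U_x=\downarrow x$ in $\tau_\leq$, and this holds iff $y\leq x$. So we recover $\leq$.

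Starting from an Alexandroff $\tau$: let $U\in\tau$. If $x\in U$ and $y\trianglelefteq_\tau x$, then $y\in U$, so $U$ is a down-set and $\tau\subseteq\tau_{\trianglelefteq_\tau}$. Conversely, let $D$ be a down-set for $\trianglelefteq_\tau$. Each $x\in D$ satisfies $U_x=\{y: y\trianglelefteq_\tau x\}\subseteq D$. Hence $D=\bigcup_{x\in D}U_x$, which is open because each $U_x$ is open by the earlier Proposition.

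The main obstacle, and the only place where the Alexandroff hypothesis is genuinely used, is this last inclusion. It requires $U_x$ to be open, which relies on arbitrary intersections of open sets being open; for a general topology the argument fails. Once both round trips are verified, the two assignments are mutually inverse bijections, which proves the theorem.
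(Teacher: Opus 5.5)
Your proof is correct and complete. Both constructions land in the right classes, both round trips are checked, and you correctly locate the one step, the openness of $D=\bigcup_{x\in D}U_x$, that needs arbitrary intersections of open sets to be open.

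There is no proof in the paper to compare against. The paper quotes this theorem as classical, citing Echi and Shirazi--Golestani, so your argument supplies the standard proof that the paper omits.

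Keep your orientation remark. In this paper it is more than a relabelling. With the paper's literal definition ($x\leq_\tau y$ iff every open set containing $x$ contains $y$), open sets are up-sets, as you say. The paper's closure formula $\overline{\{x\}}=\{y:x\leq_\tau y\}$ is in fact false for that relation. For example, for $\tau=\{\varnothing,\{1\},\{1,2\}\}$ the formula gives $\{1\}$, whereas $\overline{\{1\}}=\{1,2\}$. Your reversed relation, $y\trianglelefteq_\tau x$ iff $y\in U_x$, makes the following consistent with one another:
\begin{itemize}
\item the closure formula, which then reads $\overline{\{x\}}=\{y:x\trianglelefteq_\tau y\}$;
\item the down-set description in the statement;
\item the paper's later use of the correspondence in Theorem~\ref{T1} and in $P_P(x)=\{y:y\preceq x\}$.
\end{itemize}
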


The preceding correspondence suggests that any preorder arising naturally from the dynamics of a Markov chain should induce an associated Alexandroff topology. In the next section, we show that the accessibility relation gives rise to a canonical topology on the state space, which we call the Markov accessibility topology.

\section{The Markov Accessibility Topology}

The transition probabilities of a Markov chain naturally determine a family of interesting subsets of the state space. For a subset $U\subseteq X$, define
\[
P^{*}(U):=\{x\in X : P(x,U)>0\},
\]
where
\[
P(x,U):=\sum_{y\in U} P(x,y),
\]
and consider the following family of subsets defined by
\[
\tau_P:=\{U\subseteq X : P^{*}(U)\subseteq U\}.
\]
Equivalently, a subset $U\subseteq X$ belongs to $\tau_P$ if and only if no state outside $U$ can enter $U$ with positive probability in one step.

\bigskip

\begin{theorem}
The family
\[
\tau_P=\{U\subseteq X:P^*(U)\subseteq U\}
\]
is an Alexandroff topology on \(X\).
\end{theorem}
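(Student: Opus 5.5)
The plan is to verify the topology axioms directly, using only the fact that $P^*$ commutes with unions: $x\in P^*(U)$ if and only if $P(x,y)>0$ for some $y\in U$. Since all terms $P(x,y)$ are nonnegative, $P(x,U)=\sum_{y\in U}P(x,y)>0$ exactly when some summand is positive. This holds even when $U$ is infinite, because a series of nonnegative terms is positive iff one of its terms is. Consequently
\[
P^*\Bigl(\bigcup_{i\in I}U_i\Bigr)=\bigcup_{i\in I}P^*(U_i),
\qquad
P^*\Bigl(\bigcap_{i\in I}U_i\Bigr)\subseteq\bigcap_{i\in I}P^*(U_i),
\]
and $P^*$ is monotone. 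I would record this reformulation first, since every axiom follows from it.

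Next come the trivial members. We have $P^*(\emptyset)=\emptyset$, because an empty sum is $0$, so $\emptyset\in\tau_P$. Also $P^*(X)\subseteq X$ holds trivially, so $X\in\tau_P$. (In fact $P^*(X)=X$ by stochasticity, but this is not needed.)

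For arbitrary unions, let $U_i\in\tau_P$. The union identity gives $P^*(\bigcup U_i)=\bigcup P^*(U_i)\subseteq\bigcup U_i$. For arbitrary intersections, which is the Alexandroff property and also covers finite intersections, monotonicity gives $P^*(\bigcap U_i)\subseteq P^*(U_j)\subseteq U_j$ for every $j$. Hence $P^*(\bigcap U_i)\subseteq\bigcap U_i$.

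There is no real obstacle. The only point needing care is the infinite (countable) state space, where one must justify that $P(x,U)>0$ iff $P(x,y)>0$ for some $y\in U$. This follows from nonnegativity of the terms, and the sum converges since it is bounded by $1$.
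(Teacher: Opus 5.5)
Your proposal is correct and follows the same route as the paper. You check $\varnothing$ and $X$, use $P^*(\bigcup_i U_i)=\bigcup_i P^*(U_i)$ for unions, and use monotonicity of $P^*$ for arbitrary intersections. Your positivity argument (a sum of nonnegative terms is positive iff some term is) is in fact what justifies the union identity, which the paper attributes only to monotonicity. Monotonicity alone gives just $P^*(\bigcup_i U_i)\supseteq\bigcup_i P^*(U_i)$, which is not the direction the argument needs.
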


\begin{proof}
We first verify that $\tau_P$ is a topology on $X$. Since $P^*(\varnothing)=\varnothing$, it follows that $\varnothing\in\tau_P$. Moreover, since $P^*(X)\subseteq X$, we also have $X\in\tau_P$. Let $\{U_i\}_{i\in I}$ be an arbitrary family of elements of $\tau_P$.
Using the monotonicity of $P^*$, we obtain
\[
P^*\left(\bigcup_{i\in I}U_i\right)
=
\bigcup_{i\in I}P^*(U_i).
\]
Since $P^*(U_i)\subseteq U_i$ for every $i\in I$, it follows that
\[
P^*\left(\bigcup_{i\in I}U_i\right)
\subseteq
\bigcup_{i\in I}U_i.
\]
Therefore, $\displaystyle{\bigcup_{i\in I}U_i\in\tau_P}$. Now let $\{U_i\}_{i\in I}$ be an arbitrary family of elements of
$\tau_P$. Since $\displaystyle{\bigcap_{i\in I}U_i\subseteq U_j}$ for every $j\in I$, the monotonicity of $P^*$ yields
\[
P^*\left(\bigcap_{i\in I}U_i\right)
\subseteq P^*(U_j)
\]
for every $j\in I$. Hence,
\[
P^*\left(\bigcap_{i\in I}U_i\right)
\subseteq
\bigcap_{i\in I}P^*(U_i).
\]
Since $P^*(U_i)\subseteq U_i$ for every $i\in I$, we conclude that
\[
P^*\left(\bigcap_{i\in I}U_i\right)
\subseteq
\bigcap_{i\in I}U_i.
\]
Thus, $\displaystyle{\bigcap_{i\in I}U_i\in\tau_P}$. Consequently, $\tau_P$ is a topology closed under arbitrary
intersections, and therefore it is an Alexandroff topology on $X$.
\end{proof}

\bigskip

From now on, we refer to $\tau_P$ as the Markov accessibility topology associated with the transition matrix $P$. The next theorem establishes the precise relationship between the probabilistically constructed topology and the accessibility relation of the Markov chain. In particular, it shows that the accessibility relation is a preorder and that the Markov accessibility topology coincides with the Alexandroff topology associated with this preorder. This characterization provides the foundation for all subsequent developments.

\bigskip

\begin{theorem}
\label{T1}
Let $(X,P)$ be a discrete-time Markov chain with state space $X$. Then $\preceq$ is a preorder on $X$, and $\tau_P$ coincides with the Alexandroff topology associated with $\preceq$. 
\end{theorem}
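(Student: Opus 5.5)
We first show that $\preceq$ is a preorder, and then prove the two inclusions between $\tau_P$ and the Alexandroff topology $\tau_\preceq$ of $\preceq$.

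\emph{Preorder.} Reflexivity holds because $P^0(x,x)=1$. For transitivity, suppose $P^n(x,y)>0$ and $P^m(y,z)>0$. The Chapman--Kolmogorov equations give
\[
P^{n+m}(x,z)\ge P^n(x,y)P^m(z\text{-free term})\ldots=P^n(x,y)P^m(y,z)>0,
\]
since every summand is nonnegative and the $y$-summand alone is positive. Hence $x\preceq z$.

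\emph{Orientation convention.} The correspondence theorem recalled in the preliminaries describes the topology of a preorder as its downward closed sets, so we must fix which direction is meant. The open sets of $\tau_P$ satisfy: no state outside $U$ enters $U$ in one step. Iterating, if $y\in U$ and $x\preceq y$, then $x\in U$. So we interpret the Alexandroff topology of $\preceq$ as
\[
\tau_\preceq=\{U\subseteq X:\ y\in U,\ x\preceq y\ \Rightarrow\ x\in U\},
\]
that is, the sets closed downward in the order ``$x$ below $y$ when $x$ reaches $y$''. This makes the minimal open set $U_y=\{x: x\preceq y\}$, consistent with the later description of minimal neighborhoods through hitting probabilities.

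\emph{$\tau_P\subseteq\tau_\preceq$.} Let $U\in\tau_P$, $y\in U$ and $x\preceq y$. Choose $n$ with $P^n(x,y)>0$. By Chapman--Kolmogorov, $P^n(x,y)>0$ yields a chain $x=x_0,x_1,\dots,x_n=y$ with $P(x_{k-1},x_k)>0$ for every $k$. This follows by induction on $n$, expanding $P^n(x,y)=\sum_z P^{n-1}(x,z)P(z,y)$ and using nonnegativity. Now induct backwards along the chain. Since $x_n\in U$ and $P(x_{n-1},U)\ge P(x_{n-1},x_n)>0$, we get $x_{n-1}\in P^*(U)\subseteq U$. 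Continuing down the chain gives $x_0=x\in U$. The case $n=0$ is trivial.

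\emph{$\tau_\preceq\subseteq\tau_P$.} Let $U$ be downward closed and $x\in P^*(U)$. Then $\sum_{y\in U}P(x,y)>0$, so some $y\in U$ has $P(x,y)>0$. This means $x\preceq y$ with $n=1$, and therefore $x\in U$. Hence $P^*(U)\subseteq U$.

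The only delicate points are the path-extraction from $P^n(x,y)>0$, which is routine, and matching the direction of the order to the paper's ``downward closed'' convention. We expect the latter to be the main source of potential confusion rather than a genuine obstacle.
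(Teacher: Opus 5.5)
Your proof is correct and follows essentially the same route as the paper. The transitivity argument, the orientation of downward closedness ($y\in U$ and $x\preceq y$ imply $x\in U$), and the one-step argument for the converse inclusion all match, and your path extraction with backward propagation along $x_0,\dots,x_n$ is a pointwise repackaging of the paper's induction showing $(P^n)^{*}(U)\subseteq U$ for every $n$; the only blemish is the garbled display in the transitivity step, which should simply read $P^{n+m}(x,z)\ge P^n(x,y)P^m(y,z)>0$.
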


\begin{proof}
We first show that the relation $\preceq$ is a preorder on $X$. It is Reflexive, in fact,
for every $x\in X$, $P^0(x,x)=1>0$, where $P^0$ denotes the identity transition matrix. Hence, $x\preceq x$.
It is Transitive.
Suppose that $x\preceq y
\quad \text{and} \quad
y\preceq z$.
Then there exist integers $m,n\in\mathbb N_0$ such that
\[
P^m(x,y)>0
\quad \text{and} \quad
P^n(y,z)>0.
\]
By the Chapman--Kolmogorov equations (cf. Norris~\cite[Section~1.1]{J.R.Norris}),
\[
P^{m+n}(x,z)
=
\sum_{w\in X} P^m(x,w)P^n(w,z).
\]
Since the term corresponding to $w=y$ is strictly positive, we obtain
\[
P^{m+n}(x,z)\ge P^m(x,y)P^n(y,z)>0.
\]
Hence, $x\preceq z$. Therefore, $\preceq$ is transitive, and consequently it is a preorder on $X$. 

\bigskip

Now we prove that $\tau_P$ coincides with the Alexandroff topology associated with $\preceq$. Recall that $U\in\tau_P$ implies
$P^{*}(U)\subseteq U,
$
that is, $P(x,U)>0, \ \ \text{then} \   x\in U$.
We claim that this is equivalent to $U$ being downward closed with respect to $\preceq$. In fact, assume that $U\in\tau_P$, and let $x,y\in X$ such that $x\preceq y
\quad \text{and} \quad
y\in U$. By definition of $\preceq$, there exists $n\in\mathbb N_0$ such that $\displaystyle{P^n(x,y)>0}$. Hence,
\[
P^n(x,U)\ge P^n(x,y)>0.
\]
We prove by induction that
\[
(P^n)^{*}(U)\subseteq U
\qquad \forall n\in\mathbb N_0.
\]
For $n=1$, this is precisely the definition of $\tau_P$. Assume the result holds for some $n\ge1$, and let $\displaystyle{x\in (P^{n+1})^{*}(U)}$, then $\displaystyle{P^{n+1}(x,U)>0}$. Applying the Chapman--Kolmogorov equations,
\[
P^{n+1}(x,U)
=
\sum_{w\in X} P(x,w)P^n(w,U).
\]
Therefore, there exists $w\in X$ such that
\[
P(x,w)>0
\quad \text{and} \quad
P^n(w,U)>0.
\]
Thus,
\[
w\in (P^n)^{*}(U)\subseteq U.
\]
Since $P(x,w)>0$ and $w\in U$, we obtain $\displaystyle{P(x,U)>0}$, which implies
\[
x\in P^{*}(U)\subseteq U.
\]
Hence,
\[
(P^{n+1})^{*}(U)\subseteq U.
\]
By induction,
\[
(P^n)^{*}(U)\subseteq U
\qquad \forall n\in\mathbb N_0.
\]
Since $P^n(x,U)>0$, it follows that
\[
x\in (P^n)^{*}(U)\subseteq U.
\]
Therefore, $U$ is downward closed with respect to $\preceq$. 

\bigskip

Conversely, suppose that $U\subseteq X$ is downward closed with respect to $\preceq$. Let $x\in X$ such that $\displaystyle{P(x,U)>0}$. Then there exists $y\in U$ with $\displaystyle{P(x,y)>0}$. Hence, $x\preceq y$. Since $U$ is downward closed, it follows that $x\in U$. Therefore,
\[
P^{*}(U)\subseteq U,
\]
which proves that $U\in\tau_P$. Consequently, $\tau_P$ coincides with the Alexandroff topology associated with the preorder $\preceq$.
\end{proof}

\bigskip

\begin{theorem}
Let $(X,P)$ be a discrete-time Markov chain, and let $\tau_P$ be the Markov accessibility topology on $X$. If the chain is irreducible, then the topological space $(X,\tau_P)$ is connected.
\end{theorem}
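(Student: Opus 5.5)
The plan is to show something stronger: under irreducibility the topology $\tau_P$ is the indiscrete topology $\{\varnothing,X\}$. Connectedness then follows immediately, since a space whose only open sets are $\varnothing$ and $X$ cannot be split into two disjoint nonempty open sets.

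The key step uses Theorem~\ref{T1}. It identifies $\tau_P$ with the Alexandroff topology of the preorder $\preceq$, whose open sets are exactly the downward closed subsets of $X$. Let $U\in\tau_P$ be nonempty and pick $y\in U$. For an arbitrary $x\in X$, irreducibility gives $x\sim y$, and in particular $x\preceq y$. Since $U$ is downward closed and contains $y$, we get $x\in U$. Hence $U=X$, and so $\tau_P=\{\varnothing,X\}$.

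One can also argue directly from the definition $P^*(U)\subseteq U$, without invoking Theorem~\ref{T1}. Take $x\notin U$ and $y\in U$. Irreducibility gives $P^n(x,y)>0$ for some $n\ge 1$, which yields a path $x=z_0,z_1,\dots,z_n=y$ with each one-step probability positive. Let $k$ be the first index with $z_k\in U$. Then $z_{k-1}\notin U$ but $P(z_{k-1},U)\ge P(z_{k-1},z_k)>0$, so $z_{k-1}\in P^*(U)\setminus U$, a contradiction. The case $n=0$ cannot occur because $x\neq y$.

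To finish, suppose $X=A\cup B$ with $A,B$ disjoint, nonempty and open. Both sets lie in $\{\varnothing,X\}$, so $A=B=X$, which contradicts disjointness since $X\neq\varnothing$. There is no real obstacle here. The only point needing care is that irreducibility is used through the direction $x\preceq y$, which matches the downward-closed convention for the open sets.
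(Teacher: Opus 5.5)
Your proof is correct and rests on the same key fact as the paper: open sets of $\tau_P$ are downward closed for $\preceq$ (Theorem~\ref{T1}), and irreducibility gives $x\preceq y$ for all $x,y$. The difference is organizational: you first establish $\tau_P=\{\varnothing,X\}$ (which the paper proves separately in its next theorem) and your first-entrance argument from $P^*(U)\subseteq U$ is a valid alternative that bypasses the induction in Theorem~\ref{T1}, whereas the paper argues by contrapositive from a separation $X=U\cup V$, using downward closure of $V$ to get $u\not\preceq v$ for $u\in U$, $v\in V$.
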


\begin{proof}
We prove the contrapositive. Suppose that $(X,\tau_P)$ is not connected. Then there exist nonempty disjoint open sets $U,V\in\tau_P$ such that $X=U\cup V$ and
$U\cap V=\varnothing$.
 Let $u\in U$ and $v\in V$. We claim that $u\not\preceq v$. Indeed, suppose by contradiction that $u\preceq v$. Since $v\in V$ and $V$ is open in $\tau_P$, by the Alexandroff characterization of $\tau_P$, the set $V$ is downward closed with respect to $\preceq$. Hence, from $u\preceq v$ and $v\in V$, we obtain $u\in V$. This contradicts $u\in U$ and $U\cap V=\varnothing$. Therefore, $u\not\preceq v$.
Equivalently,
\[
P^n(u,v)=0
\qquad\text{for all } n\in\mathbb N_0.
\]
Thus, $v$ is not accessible from $u$, and so the chain is not irreducible.
\end{proof}

\bigskip

\begin{example}
Observe that connectedness does not imply irreducibility. Indeed, consider the two-state Markov chain
\[
1\to 2,
\]
whose Markov primal topology is
\[
\tau_P=\{\varnothing,\{1\},\{1,2\}\}.
\]
This topology is connected, although the chain is not irreducible, since state $1$ is not accessible from state $2$.
\end{example}

Observe that when every state communicates with every other state, the accessibility preorder collapses to the universal relation, and the associated topology acquires a remarkably simple form. The following theorem makes this correspondence precise.

\begin{theorem}
The following statements are equivalent:

\begin{enumerate}
    \item $(X,P)$ is irreducible;
    
    \item the accessibility preorder $\preceq$ satisfies
    \[
    x\preceq y
    \qquad \text{for all } x,y\in X;
    \]
    
    \item $\tau_P$ is the trivial topology on $X$, that is,
    \[
    \tau_P=\{\varnothing,X\}.
    \]
\end{enumerate}
\end{theorem}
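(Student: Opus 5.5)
The plan is to prove the cycle of implications $(1)\Rightarrow(2)\Rightarrow(3)\Rightarrow(1)$, using Theorem~\ref{T1} throughout to identify $\tau_P$ with the family of $\preceq$-downward closed subsets of $X$.

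For $(1)\Rightarrow(2)$, I will just unwind the definitions. Irreducibility means $x\sim y$ for all $x,y\in X$. In particular $x\preceq y$ for all $x,y$.

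For $(2)\Rightarrow(3)$, I will take a nonempty $U\in\tau_P$ and pick $y\in U$. By Theorem~\ref{T1}, $U$ is downward closed. Since every $x\in X$ satisfies $x\preceq y$, every $x$ lies in $U$, so $U=X$. Together with $\varnothing,X\in\tau_P$, this gives $\tau_P=\{\varnothing,X\}$.

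For $(3)\Rightarrow(1)$, I will fix $y\in X$ and consider its down-set
\[
D_y=\{x\in X: x\preceq y\}.
\]
By transitivity of $\preceq$ (Theorem~\ref{T1}), $D_y$ is downward closed, hence open in $\tau_P$. By reflexivity, $y\in D_y$, so $D_y$ is nonempty. The triviality of the topology then forces $D_y=X$, which means $x\preceq y$ for all $x$. Since $y$ was arbitrary, $\preceq$ is the universal relation, so $x\sim y$ for all $x,y$ and the chain is irreducible.

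Nothing here is a real obstacle. The only point needing care is in $(3)\Rightarrow(1)$: checking that $D_y$ is open. This is exactly where transitivity of $\preceq$ and the identification of $\tau_P$ from Theorem~\ref{T1} are used. Alternatively, one can check directly that $P^*(D_y)\subseteq D_y$: if $P(x,z)>0$ with $z\preceq y$, then $x\preceq z\preceq y$.
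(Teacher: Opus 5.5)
Your proposal is correct and matches the paper's proof: the paper also obtains $(2)\Rightarrow(3)$ from downward closedness via Theorem~\ref{T1}, and for the converse it uses the same down-set $\{z\in X: z\preceq y\}$. The only difference is phrasing: the paper argues by contradiction (this down-set would be a nonempty proper open set), whereas you argue directly that the down-set must equal $X$.
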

\begin{proof}
The equivalence between $(1)$ and $(2)$ follows directly from the definition of irreducibility. We now prove that $(2)$ implies $(3)$. Assume that
\[
x\preceq y
\qquad \text{for all } x,y\in X.
\]
Let \(U\in\tau_P\) be nonempty. Choose \(y\in U\). Since \(x\preceq y\) for every \(x\in X\), and since \(U\) is open, it follows from the Alexandroff characterization of \(\tau_P\) that \(U\) is downward closed with respect to \(\preceq\). Hence,
\[
x\in U
\qquad \text{for every } x\in X.
\]
Therefore, $U=X$. Thus the only open subsets of \(X\) are \(\varnothing\) and \(X\), and so $\tau_P=\{\varnothing,X\}$. It remains to prove that $(3)$ implies $(2)$. Assume that
\[
\tau_P=\{\varnothing,X\}.
\]
Suppose, by contradiction, that there exist \(x,y\in X\) such that
\[
x\not\preceq y.
\]
Consider the set
\[
U_y:=\{z\in X:z\preceq y\}.
\]
By reflexivity of \(\preceq\), we have \(y\in U_y\), so \(U_y\neq\varnothing\). Moreover, since \(x\not\preceq y\), we have \(x\notin U_y\), and hence \(U_y\neq X\). We claim that \(U_y\in\tau_P\). Indeed, let \(a,b\in X\) with
\[
a\preceq b
\qquad \text{and} \qquad
b\in U_y.
\]
Since \(b\in U_y\), we have \(b\preceq y\). By transitivity of \(\preceq\),
\[
a\preceq y.
\]
Thus \(a\in U_y\). Therefore \(U_y\) is downward closed with respect to \(\preceq\), and by the Alexandroff characterization of \(\tau_P\), Theorem \ref{T1}, it follows that $U_y\in\tau_P$. Hence \(U_y\) is a nonempty proper open subset of \(X\) which is a contradiction.
Therefore,
\[
x\preceq y
\qquad \text{for all } x,y\in X.
\]
\end{proof}

\bigskip

A natural question is how closed sets in the Markov primal topology reflect the dynamics of the underlying Markov chain. As it turns out, closed subsets are exactly those from which the chain cannot escape.

\begin{theorem}
A subset $C\subseteq X$ is closed in $(X,\tau_P)$ if and only if
\[
P(x,C)=1
\qquad \text{for all } x\in C.
\]
\end{theorem}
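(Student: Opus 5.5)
The plan is to work directly with the definition of $\tau_P$ rather than passing through the preorder, since the statement is essentially a complement reformulation of the defining condition $P^*(U)\subseteq U$. Write $U:=X\setminus C$. By definition, $C$ is closed in $(X,\tau_P)$ if and only if $U\in\tau_P$, that is, if and only if $P^*(U)\subseteq U$. Taking contrapositives, this inclusion says exactly that every $x\notin U$ satisfies $x\notin P^*(U)$. In other words, for every $x\in C$ we have $P(x,X\setminus C)=0$.

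The single computational step is then to translate $P(x,X\setminus C)=0$ into $P(x,C)=1$. For this we use that each row of $P$ is a probability distribution: $\sum_{y\in X}P(x,y)=1$. Splitting the sum over the partition $X=C\cup(X\setminus C)$ gives
\[
P(x,C)+P(x,X\setminus C)=1 .
\]
Both series have nonnegative terms, so the splitting is legitimate even when $X$ is countable. Hence $P(x,X\setminus C)=0$ if and only if $P(x,C)=1$.

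Chaining these equivalences proves both directions at once:
\[
C \text{ closed} \iff P^*(X\setminus C)\subseteq X\setminus C \iff \bigl(\forall x\in C,\ P(x,X\setminus C)=0\bigr) \iff \bigl(\forall x\in C,\ P(x,C)=1\bigr).
\]
The edge cases $C=\varnothing$ and $C=X$ need no separate treatment, since they are covered by the same chain of equivalences.

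There is no real obstacle here. The only point needing care is the contrapositive reading of $P^*(U)\subseteq U$: it controls only states outside $U$, which are precisely the states of $C$, and this matches the quantifier ``for all $x\in C$'' in the statement.

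As a remark, one could also argue through Theorem~\ref{T1}: closed sets are upward closed for $\preceq$, and upward closedness follows from one-step closure by the same induction used there. The direct route above avoids that induction and is the one I would write.
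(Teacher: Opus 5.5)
Your proposal is correct and follows essentially the same route as the paper: pass to the complement $U=X\setminus C$, read $P^{*}(U)\subseteq U$ as $P(x,U)=0$ for every $x\in C$, and convert this to $P(x,C)=1$ using that each row of $P$ sums to one. The paper proves the two directions separately rather than as a single chain of equivalences, but the argument is the same.
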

\begin{proof}
Suppose first that $C$ is closed in $(X,\tau_P)$. Then its complement $U:=X\setminus C$ is open in $\tau_P$. Hence, by the definition of the Markov primal topology, $\displaystyle{P^{*}(U)\subseteq U}$. Let $x\in C$. Since $x\notin U$, the inclusion above implies that $x\notin P^{*}(U)$. Therefore, $\displaystyle{P(x,U)=0}$. Since $U=X\setminus C$, we have $\displaystyle{P(x,X\setminus C)=0}$. Consequently, $\displaystyle{P(x,C)=1}$. Thus,
\[
P(x,C)=1
\qquad \text{for all } x\in C.
\]
Conversely, assume that
\[
P(x,C)=1
\qquad \text{for all } x\in C.
\]
We prove that $C$ is closed by showing that its complement $\displaystyle{U:=X\setminus C}$ is open. Let $x\in P^{*}(U)$. Then $\displaystyle{P(x,U)>0}$. We claim that $x\in U$. Indeed, if $x\in C$, then by hypothesis
\[
P(x,C)=1,
\]
and therefore
\[
P(x,U)=P(x,X\setminus C)=0,
\]
which contradicts $P(x,U)>0$. Hence $x\notin C$, and so $x\in U$. Therefore,
\[
P^{*}(U)\subseteq U,
\]
which means that $U\in\tau_P$. Hence $X\setminus C$ is open, and consequently $C$ is closed in $(X,\tau_P)$.
\end{proof}

\bigskip

\begin{remark}
The previous theorem shows that the closed sets of the Markov accessibility topology are precisely the absorbing subsets of the chain. In particular, singleton closed sets correspond to absorbing states, while more general closed sets correspond to closed communicating structures in the classical theory of Markov chains.
\end{remark}

\bigskip

\begin{corollary}
For $x\in X$, the singleton set $\{x\}$ is closed in $(X,\tau_P)$ if and only if $x$ is an absorbing state.
\end{corollary}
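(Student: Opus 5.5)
This is a direct specialization of the preceding theorem, which says that a subset $C\subseteq X$ is closed in $(X,\tau_P)$ exactly when $P(y,C)=1$ for every $y\in C$. I will take $C=\{x\}$ and translate the condition.

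\emph{Translating the condition.} For $C=\{x\}$ the only element of $C$ is $x$ itself. So the criterion of the preceding theorem reduces to the single equation $P(x,\{x\})=1$.

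\emph{Unwinding the notation.} By definition, $P(x,U)=\sum_{y\in U}P(x,y)$. For $U=\{x\}$ this sum has one term, so $P(x,\{x\})=P(x,x)$. The condition therefore reads $P(x,x)=1$, which is precisely the definition of an absorbing state.

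\emph{The equivalence.} If $\{x\}$ is closed, the forward direction of the preceding theorem gives $P(x,\{x\})=1$, hence $P(x,x)=1$ and $x$ is absorbing. Conversely, if $P(x,x)=1$, then $P(y,\{x\})=1$ holds for every $y\in\{x\}$. The reverse direction of the theorem then shows that $\{x\}$ is closed.

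I expect no real obstacle here; the only point needing care is the identification $P(x,\{x\})=P(x,x)$ from the definition of $P(x,U)$.

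As an optional cross-check, I could use the earlier description $\overline{\{x\}}=\{y:x\preceq y\}$. Then $\{x\}$ is closed if and only if no $y\neq x$ is accessible from $x$. Since $P(x,\cdot)$ is a probability vector, this happens exactly when $P(x,x)=1$.
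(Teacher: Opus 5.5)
Your proof is correct and is exactly the paper's argument: specialize the closed-set characterization $P(y,C)=1$ for all $y\in C$ to $C=\{x\}$ and use $P(x,\{x\})=P(x,x)$. Your optional cross-check via $\overline{\{x\}}=\{y:x\preceq y\}$ is also valid, since closed sets are the $\preceq$-upward closed sets; its converse direction tacitly uses the easy induction that $P(x,x)=1$ implies $P^n(x,x)=1$ for all $n$.
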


\begin{proof}
By the previous theorem, a subset $C\subseteq X$ is closed in $(X,\tau_P)$ if and only if
\[
P(y,C)=1
\qquad \text{for all } y\in C.
\]
Taking $C=\{x\}$, we obtain that $\{x\}$ is closed if and only if
\[
P(x,\{x\})=1.
\]
Since
\[
P(x,\{x\})=P(x,x),
\]
it follows that
\[
P(x,x)=1,
\]
which is precisely the definition of an absorbing state.
\end{proof}

\bigskip

\begin{corollary}
If $(X,P)$ is an irreducible discrete-time Markov chain, then the only closed subsets of $(X,\tau_P)$ are $\varnothing$ and $X$.
\end{corollary}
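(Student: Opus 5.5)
The plan is to reduce the corollary to the triviality result already established. By the theorem characterizing irreducibility, an irreducible chain $(X,P)$ has $\tau_P=\{\varnothing,X\}$. In any topological space the closed sets are exactly the complements of the open sets, so the closed sets of $(X,\tau_P)$ are
\[
X\setminus\varnothing = X
\qquad\text{and}\qquad
X\setminus X=\varnothing .
\]
This already gives the statement.

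As a cross-check, and to keep the probabilistic meaning in view, I would also give a direct argument through the characterization of closed sets as absorbing subsets. Let $C$ be a nonempty closed set. By the preceding theorem, $P(x,C)=1$ for all $x\in C$. An induction using the Chapman--Kolmogorov equations then shows $P^n(x,C)=1$ for all $n\in\mathbb N_0$ and all $x\in C$. For the inductive step, write $P^{n+1}(x,X\setminus C)=\sum_{w}P(x,w)P^n(w,X\setminus C)$. Only terms with $w\in C$ contribute, because $P(x,w)=0$ when $w\notin C$, and those terms vanish by the inductive hypothesis.

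Now fix $x\in C$ and any $y\in X$. Irreducibility gives some $n$ with $P^n(x,y)>0$. If $y\notin C$, this would force $P^n(x,X\setminus C)>0$, a contradiction. Hence $y\in C$, and therefore $C=X$.

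There is no real obstacle here. The only point needing care is the iteration from one-step to $n$-step absorption in the direct argument, and the first route avoids it entirely, so I would present that one as the proof.
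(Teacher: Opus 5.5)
Your main argument is the paper's own proof: invoke the theorem that irreducibility is equivalent to $\tau_P=\{\varnothing,X\}$ and take complements, a step you state explicitly and the paper leaves implicit. Your supplementary direct argument, via absorbing sets, the induction $P^n(x,X\setminus C)=0$, and irreducibility, is also correct but not needed.
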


\begin{proof}
By Theorem 3, if $(X,P)$ is irreducible, then
\[
\tau_P=\{\varnothing,X\},
\]
that is, $\tau_P$ is the trivial topology on $X$.
\end{proof}

\bigskip

The family of closed sets is closed under arbitrary intersections and finite unions, as expected from general topological principles. In the Markov setting, this corresponds to the stability of absorbing subsets under these operations.

\bigskip

\begin{example}
Let $X=\{1,2,3\}$ and consider the discrete-time Markov chain with transition matrix
\[
P=
\begin{pmatrix}
0 & 1 & 0\\
0 & 0 & 1\\
1 & 0 & 0
\end{pmatrix}.
\]
This chain corresponds to the deterministic cycle
\[
1\to2\to3\to1.
\]
For any $x,y\in X$, there exists $n\ge0$ such that
\[
P^n(x,y)>0.
\]
Hence, the chain is irreducible. By Theorem 3, the Markov accessibility topology $\tau_P$ consists only of the trivial subsets. Since every state is accessible from every other state, it follows that any nonempty downward closed subset must contain all elements of $X$. Therefore,
\[
\tau_P=\{\varnothing,X\}.
\]
\end{example}

\bigskip

\begin{example}
Let $X=\{1,2\}$ and consider the Markov chain with transition matrix
\[
P=
\begin{pmatrix}
0 & 1\\
0 & 1
\end{pmatrix}.
\]
In this case,
\[
1\to2,
\qquad
2\to2.
\]
The accessibility relation satisfies
\[
1\preceq1,
\qquad
1\preceq2,
\qquad
2\preceq2,
\]
but
\[
2\not\preceq1.
\]
Hence, the chain is not irreducible. The downward closed subsets of $X$ are
\[
\varnothing,
\qquad
\{1\},
\qquad
\{1,2\}.
\]
Therefore,
\[
\tau_P=\{\varnothing,\{1\},X\}.
\]
This topology is connected, since there do not exist two nonempty disjoint open subsets whose union is $X$. However, the chain is reducible. The closed subsets are
\[
\varnothing,
\qquad
\{2\},
\qquad
X.
\]
Indeed,
\[
P(2,\{2\})=1,
\]
so the state $2$ is absorbing.
\end{example}

\section{Minimal Open Sets and Local Probabilistic Structure}

In this section, we investigate the local structure of the Markov accessibility topology through its minimal open sets. These sets provide a topological description of probabilistic accessibility neighborhoods and play a role analogous to principal neighborhoods in Alexandroff spaces. We show that every open subset can be described in terms of minimal open neighborhoods and establish connections between local topological structure, accessibility relations, communication classes, and probabilistic dynamics of the Markov chain.

\begin{definition}
Let $(X,P)$ be a discrete-time Markov chain, and let $\preceq$ be the accessibility preorder associated with the chain. For each $x\in X$, define
\[
P_P(x):=\{y\in X:y\preceq x\}.
\]
We refer to $P_P(x)$ as the \emph{minimal open set associated with} $x$.
\end{definition}

\bigskip

\begin{theorem}
For each $x\in X$, the set $P_P(x)$ is the smallest open set in $\tau_P$ containing $x$. That is:

\begin{enumerate}
    \item $P_P(x)\in\tau_P$;
    
    \item $x\in P_P(x)$;
    
    \item if $U\in\tau_P$ and $x\in U$, then
    \[
    P_P(x)\subseteq U.
    \]
\end{enumerate}
\end{theorem}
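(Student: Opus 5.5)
The plan is to use Theorem~\ref{T1}, which identifies $\tau_P$ with the Alexandroff topology of the accessibility preorder $\preceq$; that is, $\tau_P$ consists exactly of the downward closed subsets of $(X,\preceq)$. With this description each of the three items becomes a short order-theoretic verification, and no further probabilistic input is needed.

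For item (1), I will show that $P_P(x)=\{y\in X: y\preceq x\}$ is downward closed. Take $a\preceq b$ with $b\in P_P(x)$. Then $b\preceq x$, and transitivity of $\preceq$ (proved in Theorem~\ref{T1}) gives $a\preceq x$, so $a\in P_P(x)$. Hence $P_P(x)\in\tau_P$. This is the same argument already used for $U_y$ in the proof of the irreducibility theorem, so I can simply cite it. For item (2), reflexivity gives $x\preceq x$, since $P^0(x,x)=1>0$. Therefore $x\in P_P(x)$.

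For item (3), let $U\in\tau_P$ with $x\in U$, and take $y\in P_P(x)$, so that $y\preceq x$. Since $U$ is open, Theorem~\ref{T1} says it is downward closed, and $x\in U$ then forces $y\in U$. Thus $P_P(x)\subseteq U$. Together with (1) and (2), this shows that $P_P(x)$ is the smallest open set containing $x$. Equivalently, $P_P(x)$ coincides with $U_x=\bigcap\{U\in\tau_P:x\in U\}$ from the preliminaries.

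None of these steps is a real obstacle once Theorem~\ref{T1} is available. The only substantive ingredient is the implication ``open $\Rightarrow$ downward closed'', whose proof in Theorem~\ref{T1} rests on the induction $(P^n)^*(U)\subseteq U$. If one wanted to avoid citing that theorem in item (3), the fallback is to argue directly: from $P^n(y,x)>0$, pick a path $y=z_0,z_1,\dots,z_n=x$ with every $P(z_{i},z_{i+1})>0$. Then apply $P^*(U)\subseteq U$ backwards along the path, starting from $z_n=x\in U$ and concluding $z_0=y\in U$.
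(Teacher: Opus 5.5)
Your proposal is correct and matches the paper's proof step for step: transitivity of $\preceq$ makes $P_P(x)$ downward closed, hence open by Theorem~\ref{T1}; reflexivity gives $x\in P_P(x)$; and downward closedness of any open $U\ni x$ gives $P_P(x)\subseteq U$. Your fallback path argument for item (3) is also valid, and it is just the induction $(P^n)^*(U)\subseteq U$ from Theorem~\ref{T1} written out along a single positive-probability path.
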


\begin{proof}
Fix $x\in X$. We prove the three assertions separately. First, we show that $P_P(x)$ is open. By definition,
\[
P_P(x)=\{y\in X:y\preceq x\}.
\]
Let $z\in P_P(x)$ and suppose that $y\preceq z$. Since $z\in P_P(x)$, we have $z\preceq x $.
By transitivity of the accessibility preorder, $y\preceq x$. Hence, $y\in P_P(x)$. Therefore, $P_P(x)$ is downward closed with respect to $\preceq$. By the Alexandroff characterization of $\tau_P$, it follows that
\[
P_P(x)\in\tau_P.
\]
Second, since $\preceq$ is reflexive, we have $x\preceq x$. Thus,
\[
x\in P_P(x).
\]
Finally, let $U\in\tau_P$ be such that $x\in U$. Since $U$ is open, it is downward closed with respect to $\preceq$. If $y\in P_P(x)$, then
\[
y\preceq x.
\]
By the facts that $x\in U$ and $U$ is downward closed, we obtain $y\in U$. Hence,
\[
P_P(x)\subseteq U.
\]
\end{proof}

\bigskip

\begin{corollary}
The family $\mathcal{B}:=\{P_P(x):x\in X\}$ forms a basis for Markov accessibility topology $\tau_P$. In particular, for every $U\in\tau_P$,
\[
U=\bigcup_{x\in U} P_P(x).
\]
\end{corollary}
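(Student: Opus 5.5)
The plan is to derive the corollary directly from the preceding theorem on minimal open sets. First I will prove the displayed identity $U=\bigcup_{x\in U}P_P(x)$ for every $U\in\tau_P$, and then deduce the basis property from it. The only ingredients are the three properties of $P_P(x)$ just established: it is open, it contains $x$, and it is contained in every open set containing $x$.

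For the identity, fix $U\in\tau_P$ and prove two inclusions.
\begin{enumerate}
\item For $\bigcup_{x\in U}P_P(x)\subseteq U$: take any $x\in U$. Part (3) of the preceding theorem, applied to the open set $U\ni x$, gives $P_P(x)\subseteq U$. Taking the union over $x\in U$ yields the inclusion.
\item For $U\subseteq\bigcup_{x\in U}P_P(x)$: each $x\in U$ lies in its own $P_P(x)$ by part (2), so $x$ belongs to the union.
\end{enumerate}
The case $U=\varnothing$ is covered automatically, since then both sides are the empty union.

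For the basis property, note that every member of $\mathcal B$ is open by part (1), so $\mathcal B\subseteq\tau_P$. The identity shows that every open set is a union of members of $\mathcal B$, which is exactly the defining property of a basis. If one prefers the neighborhood formulation of a basis, it follows just as quickly. Given $x\in U\in\tau_P$, the set $P_P(x)\in\mathcal B$ satisfies $x\in P_P(x)\subseteq U$, again by parts (2) and (3).

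I do not expect a real obstacle here, since everything reduces to the three properties in the preceding theorem. The only point requiring care is to invoke part (3) specifically with the open set $U$. This is what guarantees that the union over $x\in U$ does not overshoot $U$, and it is the one place where minimality of $P_P(x)$, rather than mere openness, is used.
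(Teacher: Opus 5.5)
Your proof is correct and follows the argument the paper leaves implicit. The paper states the corollary without proof, as an immediate consequence of the three properties of $P_P(x)$ in the preceding theorem, and you use each of them in the right place: openness for $\mathcal B\subseteq\tau_P$, $x\in P_P(x)$ for $U\subseteq\bigcup_{x\in U}P_P(x)$, and minimality for the reverse inclusion.
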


\bigskip

\begin{proposition}
For $x,y\in X$, $\displaystyle{x\preceq y
 \ \text{if and only if} \ 
P_P(x)\subseteq P_P(y)}$.

\end{proposition}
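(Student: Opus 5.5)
The plan is to argue directly from the definition $P_P(x)=\{z\in X: z\preceq x\}$, using only that $\preceq$ is a preorder (Theorem~\ref{T1}). The two implications are handled separately: one uses transitivity, the other reflexivity.

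For the forward implication, assume $x\preceq y$ and let $z\in P_P(x)$, so that $z\preceq x$. Transitivity of $\preceq$ then gives $z\preceq y$, which means $z\in P_P(y)$. Hence $P_P(x)\subseteq P_P(y)$. Alternatively, one could use the preceding theorem: $P_P(y)$ is an open set and it contains $x$ (since $x\preceq y$), so the minimal open set $P_P(x)$ is contained in $P_P(y)$. Either route works; the first is the shortest.

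For the converse, assume $P_P(x)\subseteq P_P(y)$. By reflexivity, $x\preceq x$, so $x\in P_P(x)$ (item~2 of the preceding theorem). The inclusion then gives $x\in P_P(y)$, which by definition means $x\preceq y$.

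There is no real obstacle here. The one point needing care is the direction of the inclusion. It must match the convention that $P_P(x)$ collects the states \emph{from which} $x$ is accessible, and that open sets are downward closed with respect to $\preceq$. With that convention, $x\preceq y$ corresponds to $P_P(x)\subseteq P_P(y)$, as stated.
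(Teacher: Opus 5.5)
Your proof is correct and follows the paper's argument essentially step for step: transitivity of $\preceq$ gives $P_P(x)\subseteq P_P(y)$ from $x\preceq y$, and reflexivity ($x\in P_P(x)$) together with the inclusion gives $x\preceq y$. Your alternative forward route, via minimality of $P_P(x)$ among open sets containing $x$, is also valid.
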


\begin{proof}
Assume that $x\preceq y$. Let $z\in P_P(x)$. Then $z\preceq x$. By transitivity of the accessibility preorder, $z\preceq y$. Hence, $z\in P_P(y)$. Therefore, $P_P(x)\subseteq P_P(y)$. Conversely, suppose that $P_P(x)\subseteq P_P(y)$. Since $\preceq$ is reflexive, $x\preceq x$, and therefore $x\in P_P(x)$. By the inclusion hypothesis, $x\in P_P(y)$. Hence, $x\preceq y$. This completes the proof.
\end{proof}

\bigskip

\begin{proposition}
For $x,y\in X$, $P_P(x)=P_P(y)
\ \text{if and only if} \ \ 
x\sim y$. Hence, the minimal open sets are constant on communication classes.
\end{proposition}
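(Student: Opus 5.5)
The plan is to deduce the statement directly from the immediately preceding Proposition, which says that $x\preceq y$ if and only if $P_P(x)\subseteq P_P(y)$. Equality of two sets is the same as inclusion in both directions, so the claim is just that Proposition applied twice, once with the roles of $x$ and $y$ swapped.

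First I prove the forward direction. Suppose $P_P(x)=P_P(y)$. Then $P_P(x)\subseteq P_P(y)$, and the preceding Proposition gives $x\preceq y$. Also $P_P(y)\subseteq P_P(x)$, and the same Proposition with $x$ and $y$ interchanged gives $y\preceq x$. By the definition of communication, $x\sim y$.

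Next comes the converse. If $x\sim y$, then $x\preceq y$ and $y\preceq x$. The Proposition gives $P_P(x)\subseteq P_P(y)$ and $P_P(y)\subseteq P_P(x)$, hence $P_P(x)=P_P(y)$.

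Finally, the sentence "the minimal open sets are constant on communication classes" is a restatement of the converse direction. If $x$ and $y$ lie in the same communication class, then $x\sim y$, so $P_P(x)=P_P(y)$. Thus the map $x\mapsto P_P(x)$ factors through the quotient $X/\!\sim$. The forward direction adds that this induced map is injective: distinct communication classes have distinct minimal open sets. I do not expect any genuine obstacle. The only point needing care is to invoke the preceding Proposition, rather than re-deriving it, in both directions with the correct roles of $x$ and $y$. That Proposition itself rests only on reflexivity and transitivity of $\preceq$, established in Theorem~\ref{T1}.
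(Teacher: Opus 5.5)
Your proof is correct and matches the paper's own proof: both directions follow by applying the preceding proposition ($x\preceq y$ iff $P_P(x)\subseteq P_P(y)$) twice, once with $x$ and $y$ swapped. Your added remark that $x\mapsto P_P(x)$ descends to an injective map on $X/{\sim}$ is a correct restatement of the two directions.
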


\begin{proof}
Assume first that $P_P(x)=P_P(y)$. By the previous proposition, $P_P(x)\subseteq P_P(y) \ \  \text{if and only if}\  \  x\preceq y$, and similarly, $P_P(y)\subseteq P_P(x)
 \ \ \text{if and only if} \ \  y\preceq x$. Since the two sets are equal, both inclusions hold. Therefore, $x\preceq y
 \ \text{and} \ 
y\preceq x$, which means that $x\sim y$. Conversely, suppose that $x\sim y$. Then $x\preceq y
 \ \  \text{and} \  \ 
y\preceq x$. Again by the previous proposition, $P_P(x)\subseteq P_P(y) \ \ \text{and} \ \ 
P_P(y)\subseteq P_P(x)$. Hence, $P_P(x)=P_P(y)$.
\end{proof}

\bigskip

The previous results demonstrate that the Markov accessibility topology is entirely governed by the accessibility structure of the underlying Markov chain. In particular, minimal open sets encode both the accessibility preorder and the communication classes, thereby capturing the local geometry of the topology. We now show that these sets admit a natural probabilistic characterization through hitting probabilities.

\bigskip

\begin{theorem}
\label{T:hitting}
For a subset $U\subseteq X$, define the first hitting time of $U$ by
\[
\tau_U:=\inf\{n\geq 0 : X_n\in U\}.
\]
Then the following are equivalent:

\begin{enumerate}
    \item $U\in\tau_P$;
    
    \item
    $ \displaystyle{
    \mathbb P_y(\tau_U<\infty)=0,
    \qquad \text{for every } y\in X\setminus U.
    }$
\end{enumerate}
\end{theorem}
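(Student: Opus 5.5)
The plan is to pass through the accessibility preorder using Theorem~\ref{T1}, which says $U\in\tau_P$ if and only if $U$ is downward closed for $\preceq$. The probabilistic side is translated into accessibility by the standard decomposition of the hitting event into finite-time events:
\[
\{\tau_U<\infty\}=\bigcup_{n\ge 0}\{X_n\in U\}.
\]
Countable subadditivity and monotonicity then give, for every $y\in X$,
\[
\mathbb P_y(\tau_U<\infty)=0
\quad\Longleftrightarrow\quad
P^n(y,U)=\mathbb P_y(X_n\in U)=0\ \text{for all } n\in\mathbb N_0 .
\]
Since $P^n(y,U)=\sum_{u\in U}P^n(y,u)$ is a sum of nonnegative terms, the right-hand side is in turn equivalent to $y\not\preceq u$ for every $u\in U$. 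Here $\mathbb P_y$ is read as $\mathbb P_{\delta_y}$.

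For $(1)\Rightarrow(2)$, let $U\in\tau_P$ and $y\in X\setminus U$. Suppose $\mathbb P_y(\tau_U<\infty)>0$. By the equivalence above there exist $n\ge 0$ and $u\in U$ with $P^n(y,u)>0$, so $y\preceq u$. Because $U$ is downward closed (Theorem~\ref{T1}), this gives $y\in U$, a contradiction. Alternatively, one could cite directly the inductive statement $(P^n)^*(U)\subseteq U$ established inside the proof of Theorem~\ref{T1}, which yields $P^n(y,U)=0$ for all $n$ whenever $y\notin U$.

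For $(2)\Rightarrow(1)$, we check directly that $P^*(U)\subseteq U$. Take $x$ with $P(x,U)>0$. If $x\notin U$, then by (2) we would have $\mathbb P_x(\tau_U<\infty)=0$. But $\mathbb P_x(\tau_U<\infty)\ge \mathbb P_x(X_1\in U)=P(x,U)>0$, which is a contradiction. Hence $x\in U$.

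The only step needing care is the measure-theoretic identification of $\{\tau_U<\infty\}$ with a countable union of events $\{X_n\in U\}$, together with $\mathbb P_y(X_n\in U)=P^n(y,U)$. Both follow from the definitions of the $n$-step transition probabilities and countable (sub)additivity. The countability of $X$ ensures that $P^n(y,U)$ is a countable sum, so positivity of the sum is equivalent to positivity of some term. Everything else is bookkeeping with Theorem~\ref{T1}.
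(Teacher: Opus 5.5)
Your proof is correct and essentially follows the paper's argument; the direction $(1)\Rightarrow(2)$ is the same contradiction, using downward closedness from Theorem~\ref{T1}. The only difference is in $(2)\Rightarrow(1)$: you check $P^{*}(U)\subseteq U$ directly from the one-step bound $\mathbb P_x(\tau_U<\infty)\ge P(x,U)$, whereas the paper proves downward closedness using all $n$-step probabilities and then invokes Theorem~\ref{T1}, so your version is slightly shorter and does not need Theorem~\ref{T1} in that direction.
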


\begin{proof}
Suppose first that \(U\in\tau_P\). We prove that $\displaystyle{ \mathbb P_y(\tau_U<\infty)=0
 \ \text{for every } y\in X\setminus U}$. Let \(y\in X\setminus U\). If, by contradiction,
$ \displaystyle{
\mathbb P_y(\tau_U<\infty)>0
}$,
then there exists \(n\in\mathbb N_0\) such that
\[
\mathbb P_y(X_n\in U)>0.
\]
Equivalently,
\[
P^n(y,U)>0.
\]
Hence, there exists \(x\in U\) such that
\[
P^n(y,x)>0.
\]
Therefore,
\[
y\preceq x.
\]
Since \(U\in\tau_P\), by the Alexandroff characterization of the Markov accessibility topology, Theorem \ref{T1},  \(U\) is downward closed with respect to \(\preceq\). Thus, from \(y\preceq x\) and \(x\in U\), we obtain
\[
y\in U,
\]
which contradicts \(y\in X\setminus U\). Hence,
\[
\mathbb P_y(\tau_U<\infty)=0.
\]
Conversely, assume that
\[
\mathbb P_y(\tau_U<\infty)=0
 \  \text{for every } y\in X\setminus U.
\]
Let \(y\in X\) and \(x\in U\) be such that
\[
y\preceq x.
\]
Then there exists \(n\in\mathbb N_0\) such that
\[
P^n(y,x)>0.
\]
Since \(x\in U\), it follows that
\[
P^n(y,U)>0.
\]
Thus,
\[
\mathbb P_y(\tau_U<\infty)>0.
\]
By the hypothesis, this is impossible if \(y\in X\setminus U\). Therefore, $y\in U$. Hence, \(U\) is downward closed with respect to \(\preceq\). By the Alexandroff characterization of \(\tau_P\), we conclude that $\displaystyle{U\in\tau_P}$.
\end{proof}

\bigskip

The following theorem provides a pathwise interpretation of open sets in the Markov accessibility topology. Informally, these are precisely the regions of the state space that can only be visited during an initial portion of a trajectory; once the chain leaves such a set, it can never return to it.

\bigskip

\begin{theorem}[Pathwise characterization of open sets]
\label{T:pathwise}
For a subset $U\subseteq X$, define the exit time from $U$ by
\[
\sigma_U:=\inf\{n\in\mathbb N_0:X_n\notin U\},
\]
Then the following statements are equivalent:

\begin{enumerate}
    \item $U\in\tau_P$;

    \item for every initial distribution $\mu$ on $X$, the process
    \[
    \bigl(\mathbf 1_U(X_n)\bigr)_{n\geq 0}
    \]
    is $\mathbb P_\mu$-almost surely non-increasing;

    \item for every initial distribution $\mu$ on $X$,
    \[
    \mathbb P_\mu
    \bigl(
    \sigma_U<\infty,\,
    \tau_U\circ\theta_{\sigma_U}<\infty
    \bigr)
    =0.
    \]
\end{enumerate}
The random variable
\[
\tau_U\circ\theta_{\sigma_U}
\]
represents the first return time to \(U\) after the exit time \(\sigma_U\). In other words, once the chain exits the set \(U\), it cannot return to \(U\) with positive probability.
\end{theorem}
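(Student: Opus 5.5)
I will prove the cycle $(1)\Rightarrow(2)\Rightarrow(3)\Rightarrow(1)$, working throughout with the one-step characterization $P^*(U)\subseteq U$ and, where convenient, with the downward-closed description of Theorem~\ref{T1}. Since $X$ is countable, $\mathbb P_\mu=\sum_x\mu(x)\mathbb P_{\delta_x}$, and there are only countably many times $n$. Hence every ``almost surely for all $n$'' statement reduces to countably many statements of the form $\mathbb P_\mu(\cdot)=0$ at fixed times. This lets me use the finite-dimensional distributions
\[
\mathbb P_\mu(X_0=x_0,\dots,X_{n}=x_{n})=\mu(x_0)P(x_0,x_1)\cdots P(x_{n-1},x_n).
\]

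For $(1)\Rightarrow(2)$, note that non-increase of $\mathbf 1_U(X_n)$ fails exactly on $\bigcup_n\{X_n\notin U,\ X_{n+1}\in U\}$. For fixed $n$,
\[
\mathbb P_\mu(X_n\notin U,X_{n+1}\in U)=\sum_{y\notin U}\mathbb P_\mu(X_n=y)\,P(y,U),
\]
and each $P(y,U)=0$ with $y\notin U$ because $P^*(U)\subseteq U$. A countable union of null events is null, which gives (2).

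For $(2)\Rightarrow(3)$, I argue pathwise. On the event $\{\sigma_U<\infty,\ \tau_U\circ\theta_{\sigma_U}<\infty\}$ there are times $\sigma_U\le m$ with $X_{\sigma_U}\notin U$ and $X_m\in U$. Then $\mathbf 1_U(X_n)$ takes the value $0$ at time $\sigma_U$ and $1$ at the later time $m$, so it is not non-increasing. The event is therefore contained in the null set from (2).

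For $(3)\Rightarrow(1)$, I argue by contrapositive. If $U\notin\tau_P$, there is $y\notin U$ with $P(y,U)>0$, so there is some $x\in U$ with $P(y,x)>0$. Take $\mu=\delta_y$. Then $\sigma_U=0$, since $X_0=y\notin U$, and $\theta_{\sigma_U}$ is the identity. On $\{X_1=x\}$ we have $\tau_U\le 1<\infty$. Hence the probability in (3) is at least $P(y,x)>0$, a contradiction.

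The only genuinely delicate point is the interpretation of $\tau_U\circ\theta_{\sigma_U}$. With $\tau_U$ defined from time $0$, this quantity counts a visit to $U$ at or after the exit time. I will state explicitly that the composition is evaluated on $\{\sigma_U<\infty\}$ and that $\theta_{\sigma_U}$ starts at a state outside $U$. It follows that $\tau_U\circ\theta_{\sigma_U}\ge1$ there, and this quantity is a genuine return time. Measurability of $\sigma_U$ and of the composed variable is routine, since both are determined by countably many coordinate events.
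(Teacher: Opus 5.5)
Your proof is correct, and your steps $(1)\Rightarrow(2)$ and $(2)\Rightarrow(3)$ coincide with the paper's: one-step null events via the Markov property, then a pathwise inclusion of the event in (3) into the event where the indicator increases. The difference is in how you close the loop.

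The paper proves the two equivalences $(1)\Leftrightarrow(2)$ and $(2)\Leftrightarrow(3)$. It obtains $(2)\Rightarrow(1)$ by starting the chain from $\delta_x$ with $x\notin U$, and $(3)\Rightarrow(2)$ by the reverse pathwise inclusion. You instead go directly from (3) to (1): you start from $\delta_y$ at a state $y\notin U$ that leaks into $U$, where $\sigma_U=0$ and the event in (3) contains $\{X_1=x\}$.

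Your cycle is more economical (three implications instead of four). It also makes explicit that (3) only needs to be tested on Dirac initial distributions at states outside $U$, and only at the first step.

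The paper's arrangement yields something slightly stronger in the other direction. The two exceptional events $\bigcup_n\{X_n\notin U,\ X_{n+1}\in U\}$ and $\{\sigma_U<\infty,\ \tau_U\circ\theta_{\sigma_U}<\infty\}$ coincide as sets of paths. Hence (2) and (3) are equivalent for each fixed initial distribution $\mu$ separately. Your cycle does not record this, since it passes through (1), which uses the hypothesis for Dirac masses.
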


\begin{proof}
Assume that $U\in\tau_P$. By definition of the Markov primal topology,
\[
P^{*}(U)\subseteq U.
\]
Equivalently,
\[
P(x,U)=0,
\qquad
\forall\,x\in X\setminus U.
\]
Let $\mu$ be any initial distribution. For each $n\geq0$, consider the event
\[
A_n:=\{X_n\notin U,\ X_{n+1}\in U\}.
\]
We claim that
\[
\mathbb P_\mu(A_n)=0.
\]
Indeed,
\[
\mathbb P_\mu(A_n)
=
\mathbb E_\mu
\!\left[
\mathbf 1_{\{X_n\notin U\}}
\mathbf 1_{\{X_{n+1}\in U\}}
\right].
\]
Using the Markov property,
\[
\mathbb P_\mu(A_n)
=
\mathbb E_\mu
\!\left[
\mathbf 1_{\{X_n\notin U\}}
P(X_n,U)
\right].
\]
Since $P(x,U)=0$ for every $x\notin U$, the integrand vanishes identically. Hence
\[
\mathbb P_\mu(A_n)=0
\qquad
\forall\,n\geq0.
\]
Therefore,
\[
\mathbb P_\mu
\!\left(
\bigcup_{n\ge0}A_n
\right)
=
0.
\]
In other words, with probability one the trajectory can never make a transition from
$X\setminus U$ into $U$. Consequently,
$\displaystyle{
\bigl(\mathbf 1_U(X_n)\bigr)_{n\ge0}
}$ is $\mathbb P_\mu$-almost surely non-increasing. Now, we assume that for every initial distribution $\mu$,
$ \displaystyle{
\bigl(\mathbf 1_U(X_n)\bigr)_{n\ge0}
}$ is $\mathbb P_\mu$-almost surely non-increasing. We are going to show that $U\in\tau_P$. It suffices to prove that
\[
P(x,U)=0,
\qquad
\forall\,x\in X\setminus U.
\]
Fix $x\in X\setminus U$ and choose the initial distribution
$\displaystyle{
\mu=\delta_x.
}$
Then
\[
\mathbf 1_U(X_0)=0
\qquad
\mathbb P_{\delta_x}\text{-a.s.}
\]
Since the indicator process is almost surely non-increasing,
\[
\mathbf 1_U(X_1)=0
\qquad
\mathbb P_{\delta_x}\text{-a.s.}
\]
Hence
\[
\mathbb P_{\delta_x}(X_1\in U)=0,
\]
that is, $\displaystyle{P(x,U)=0}$. Since $x\in X\setminus U$ was arbitrary, $\displaystyle{P^{*}(U)\subseteq U}$. Therefore, $\displaystyle{U\in\tau_P}$. Now, we suppose that for every initial distribution $\mu$,
$ \displaystyle{
\bigl(\mathbf 1_U(X_n)\bigr)_{n\ge0}
}$ is $\mathbb P_\mu$-almost surely non-increasing.
Fix an initial distribution $\mu$.
On the event $\{\sigma_U<\infty\}$, the chain exits $U$ at time $\sigma_U$, so $\displaystyle{X_{\sigma_U}\notin U}$. Since the indicator process is non-increasing, once it reaches the value $0$ it can never return to the value $1$. Therefore the shifted trajectory $\displaystyle{(X_{\sigma_U+n})_{n\ge0}}$ never hits $U$. Equivalently,
\[
\tau_U\circ\theta_{\sigma_U}
=
\infty
 \  \ 
\text{on } \ \ \{\sigma_U<\infty\},
\]
almost surely. Hence
\[
\mathbb P_\mu
\bigl(
\sigma_U<\infty,
\,
\tau_U\circ\theta_{\sigma_U}<\infty
\bigr)
=
0.
\]
Finally, assume that for every initial distribution $\mu$,
\[
\mathbb P_\mu
\bigl(
\sigma_U<\infty,
\,
\tau_U\circ\theta_{\sigma_U}<\infty
\bigr)
=
0.
\]
Fix an initial distribution $\mu$. We show that
\[
\bigl(\mathbf 1_U(X_n)\bigr)_{n\ge0}
\]
is $\mathbb P_\mu$-almost surely non-increasing. Suppose, to the contrary, that the indicator process is not non-increasing on some trajectory. Then there exist integers $m<n$ such that
\[
X_m\notin U,
\qquad
X_n\in U.
\]
Let $\sigma_U$ be the first exit time from $U$. Then $\displaystyle{\sigma_U<\infty}$, and there exists a later time at which the chain returns to $U$. Consequently,
\[
\tau_U\circ\theta_{\sigma_U}<\infty.
\]
Thus the trajectory belongs to the event
\[
\{\sigma_U<\infty,
\,
\tau_U\circ\theta_{\sigma_U}<\infty\},
\]
which has probability zero by assumption. Therefore the set of trajectories for which
$\displaystyle{
\bigl(\mathbf 1_U(X_n)\bigr)_{n\ge0}
}$
fails to be non-increasing has probability zero. Hence the process is
$\mathbb P_\mu$-almost surely non-increasing.
\end{proof}

\bigskip

In the next result, we present the minimal open set as positive hitting domains.

\begin{corollary}
For \(x\in X\), let $\displaystyle{\tau_x:=\inf\{n\in\mathbb N_0:X_n=x\}}$ denote the first hitting time of the state \(x\). Then,
\[
P_x(x)
=
\{y\in X:\mathbb P_y(\tau_x<\infty)>0\}.
\]
Precisely, the minimal open neighborhood of \(x\) is precisely the set of states from which \(x\) can be hit with positive probability.
\end{corollary}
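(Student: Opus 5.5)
The notation $P_x(x)$ in the statement should be read as the minimal open set $P_P(x)=\{y\in X: y\preceq x\}$ from the preceding definition. The plan is to prove the set equality by double inclusion, reducing both sides to the condition $P^n(y,x)>0$ for some $n\in\mathbb N_0$. The one probabilistic identity needed is
\[
\{\tau_x<\infty\}=\bigcup_{n\ge 0}\{X_n=x\}.
\]
Under $\mathbb P_y$ this gives, by monotonicity and countable subadditivity,
\[
P^n(y,x)=\mathbb P_y(X_n=x)\le \mathbb P_y(\tau_x<\infty)\le \sum_{n\ge0}P^n(y,x).
\]

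For the inclusion $\subseteq$, take $y\in P_P(x)$. Then $y\preceq x$, so some $n\in\mathbb N_0$ has $P^n(y,x)>0$. The left inequality above then gives $\mathbb P_y(\tau_x<\infty)>0$.

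For the inclusion $\supseteq$, assume $\mathbb P_y(\tau_x<\infty)>0$. The right inequality shows that the series $\sum_n P^n(y,x)$ is positive, so some term $P^n(y,x)$ is positive. Hence $y\preceq x$, that is, $y\in P_P(x)$.

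There is also a route through Theorem~\ref{T:hitting}, applied to $U=P_P(x)$, which is open by the theorem on minimal open sets. That theorem already gives $\mathbb P_y(\tau_x<\infty)\le \mathbb P_y(\tau_U<\infty)=0$ for $y\notin P_P(x)$, since $x\in U$. This is exactly the inclusion $\supseteq$.

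The only delicate point is the countable-union step. It is what passes from ``positive probability of hitting eventually'' to ``positive probability at a fixed time $n$'', and it requires the state space to be countable so that the hitting event is a countable union. The case $n=0$ also needs checking: it corresponds to $y=x$, where $\tau_x=0$ almost surely, and it is consistent with reflexivity of $\preceq$ and with $P^0$ being the identity.
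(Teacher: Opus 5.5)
Your proof is correct and is essentially the paper's argument: both sides reduce to the existence of some $n\in\mathbb N_0$ with $P^n(y,x)=\mathbb P_y(X_n=x)>0$. Your bound $P^n(y,x)\le\mathbb P_y(\tau_x<\infty)\le\sum_{n\ge0}P^n(y,x)$ spells out the countable-union step that the paper asserts without justification, and you are right that $P_x(x)$ should read $P_P(x)$. One small correction to your closing remark: the union $\bigcup_{n\ge0}\{X_n=x\}$ is countable because time is indexed by $\mathbb N_0$, not because the state space is countable.
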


\begin{proof}
By definition of the minimal open neighborhood,
\[
P_P(x)=\{y\in X:y\preceq x\}.
\]
Thus, \(y\in P_P(x)\) if and only if $y\preceq x$. By definition of the accessibility relation, this is equivalent to the existence of some
\(n\in\mathbb N_0\) such that $\displaystyle{P^n(y,x)>0}$. On the other hand,
\[
\mathbb P_y(\tau_x<\infty)>0
\]
if and only if the chain, starting from \(y\), reaches \(x\) at some finite time with positive probability. This is equivalent to saying that there exists \(n\in\mathbb N_0\) such that
\[
\mathbb P_y(X_n=x)>0.
\]
Since
\[
\mathbb P_y(X_n=x)=P^n(y,x),
\]
we obtain that
\[
\mathbb P_y(\tau_x<\infty)>0
\]
holds if and only if there exists \(n\in\mathbb N_0\) such that $\displaystyle{P^n(y,x)>0}$. Therefore, $y\in P_P(x)$.
\end{proof}

\section{Communication Quotients and Canonical $T_0$ Representations}

The communication structure of a Markov chain is reflected in a remarkable way by its Markov accessibility topology. Indeed, previous results show that states belonging to the same communication class possess identical minimal open neighborhoods and are therefore indistinguishable from the topological point of view. This suggests that the true topological objects associated with the chain are not individual states, but rather communication classes.

\bigskip

Motivated by this observation, we study the quotient space obtained by identifying states that communicate with each other. We prove that the accessibility preorder descends naturally to the quotient and becomes a partial order, yielding a canonical \(T_0\)-Alexandroff space that represents the global communication structure of the chain.

\bigskip

\begin{definition}
For each state \(x\in X\), the set
\[
[x]
=
\{y\in X : y\sim x\}
\]
is called the \emph{communication class} of \(x\). The quotient set
\[
X/{\sim}
=
\{[x] : x\in X\}
\]
is called the \emph{communication quotient space}.
\end{definition}

\bigskip

Having identified communication classes as the natural topological building blocks of the Markov accessibility topology, it remains to understand how accessibility behaves at the level of these classes. This leads naturally to a relation on the quotient space \(X/{\sim}\), obtained by lifting the accessibility preorder from states to communication classes. Let \([x],[y]\in X/{\sim}\), and consider the relation
\[
[x]\preceq_q [y]
\quad \text{if and only if} \quad
x\preceq y.
\]

The following proposition shows that this relation is well defined, that is, it does not depend on the choice of representatives of the communication classes.

\bigskip

\begin{proposition}
The relation \(\preceq_q\) on \(X/{\sim}\) is well defined.
\end{proposition}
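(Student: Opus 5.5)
To show that \(\preceq_q\) is well defined, I have to check that whether \([x]\preceq_q[y]\) holds does not depend on the chosen representatives. Concretely, suppose \(x\sim x'\) and \(y\sim y'\). I will show that \(x\preceq y\) if and only if \(x'\preceq y'\). The only tools needed are the definition of \(\sim\) and the transitivity of \(\preceq\) established in Theorem~\ref{T1}.

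For the forward direction, assume \(x\preceq y\). Since \(x\sim x'\), we have in particular \(x'\preceq x\). Since \(y\sim y'\), we have \(y\preceq y'\). Together these give the chain
\[
x'\preceq x\preceq y\preceq y',
\]
and two applications of transitivity yield \(x'\preceq y'\). The converse is the same argument with the roles of the primed and unprimed representatives exchanged, because \(\sim\) is symmetric. This shows the relation depends only on the classes \([x]\) and \([y]\).

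An alternative route goes through the minimal open sets. By the proposition characterizing accessibility via minimal open sets, \(x\preceq y\) is equivalent to \(P_P(x)\subseteq P_P(y)\). By the subsequent proposition, \(P_P\) is constant on communication classes. So the condition \(P_P(x)\subseteq P_P(y)\) depends only on \([x]\) and \([y]\), which again gives well-definedness.

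There is no genuine obstacle here. The only point needing care is to use the correct half of each communication relation: \(x'\preceq x\) on the left end of the chain and \(y\preceq y'\) on the right end.
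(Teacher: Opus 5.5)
Your proof is correct and follows the paper's argument: from \(x\sim x'\) and \(y\sim y'\) you take \(x'\preceq x\) and \(y\preceq y'\), then apply transitivity along \(x'\preceq x\preceq y\preceq y'\). Beyond the paper, you state the converse direction explicitly via the symmetry of \(\sim\), which the paper leaves implicit, and your alternative route through \(P_P\) is also valid, since those propositions are proved without reference to \(\preceq_q\).
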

\begin{proof}
Let \([x],[y]\in X/{\sim}\). We need to show that the definition
\[
[x]\preceq_q [y]
\quad \text{if and only if} \quad
x\preceq y
\]
does not depend on the choice of representatives. Suppose that
\[
[x]=[x']
\qquad \text{and} \qquad
[y]=[y'].
\]
This means that
\[
x\sim x'
\qquad \text{and} \qquad
y\sim y'.
\]
Therefore,
\[
x'\preceq x
\qquad \text{and} \qquad
y\preceq y'.
\]
Now assume that $x\preceq y$. Using the transitivity of the accessibility preorder, we obtain
\[
x'\preceq x\preceq y\preceq y'.
\]
Hence,
\[
x'\preceq y'.
\]
\end{proof}

The previous proposition ensures that accessibility induces a relation on the communication quotient. The next result shows that passing to communication classes removes precisely the ambiguity caused by mutual accessibility, so that the induced relation becomes a partial order.

\bigskip

\begin{theorem}
The relation \(\preceq_q\) is a partial order on the communication quotient \(X/{\sim}\).
\end{theorem}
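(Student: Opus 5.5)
We verify the three defining properties of a partial order for \(\preceq_q\) on \(X/{\sim}\), relying on the fact that \(\preceq\) is a preorder on \(X\) (Theorem~\ref{T1}) and on the preceding proposition, which guarantees that \(\preceq_q\) does not depend on the choice of representatives. The strategy is to fix representatives of the classes involved, reduce each assertion about \(\preceq_q\) to the corresponding assertion about \(\preceq\), and then translate back.

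\emph{Reflexivity.} For any class \([x]\), reflexivity of \(\preceq\) gives \(x\preceq x\), since \(P^0(x,x)=1>0\). By definition of \(\preceq_q\), this means \([x]\preceq_q[x]\).

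\emph{Transitivity.} Suppose \([x]\preceq_q[y]\) and \([y]\preceq_q[z]\). By well-definedness we may use the same representative \(y\) in both relations, so \(x\preceq y\) and \(y\preceq z\). Transitivity of \(\preceq\), which follows from the Chapman--Kolmogorov equations, gives \(x\preceq z\), hence \([x]\preceq_q[z]\).

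\emph{Antisymmetry.} This is the only genuinely new point, though it is still immediate. Suppose \([x]\preceq_q[y]\) and \([y]\preceq_q[x]\). Then \(x\preceq y\) and \(y\preceq x\), that is, \(x\sim y\). Since the communication classes are the equivalence classes of \(\sim\), it follows that \([x]=[y]\).

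The only point requiring care is to invoke well-definedness explicitly. Without it, the two hypotheses in the transitivity and antisymmetry steps could a priori involve different representatives of the same class. Once that proposition is cited, no real obstacle remains.
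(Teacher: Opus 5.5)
Your proof is correct and follows the paper's argument essentially verbatim: reflexivity and transitivity are inherited from the preorder $\preceq$, and antisymmetry holds because $x\preceq y$ and $y\preceq x$ mean $x\sim y$, so $[x]=[y]$. Your explicit appeal to the well-definedness proposition to justify working with fixed representatives is a point the paper leaves implicit, but it does not change the argument.
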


\begin{proof}
We prove that \(\preceq_q\) is reflexive, antisymmetric, and transitive. First, let \([x]\in X/{\sim}\). Since the accessibility preorder \(\preceq\) is reflexive, we have $x\preceq x$. Therefore, by definition of \(\preceq_q\),
\[
[x]\preceq_q [x].
\]
Thus, \(\preceq_q\) is reflexive. Now suppose that $[x]\preceq_q [y]$ and $[y]\preceq_q [x]$. By definition of \(\preceq_q\), this means that $x\preceq y$ and
$y\preceq x$. Hence,
\[
x\sim y.
\]
Therefore,
\[
[x]=[y].
\]
Thus, \(\preceq_q\) is antisymmetric. Finally, suppose that $[x]\preceq_q [y]$ and $[y]\preceq_q [z]$, which means that $x\preceq y$ and $y\preceq z$. Since \(\preceq\) is transitive, it follows that $x\preceq z$. Therefore,
\[
[x]\preceq_q [z].
\]
Thus, \(\preceq_q\) is transitive.
\end{proof}

\bigskip

\begin{corollary}
The Alexandroff topology on \(X/{\sim}\) induced by the partial order \(\preceq_q\) is \(T_0\).
\end{corollary}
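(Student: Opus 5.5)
To prove that the Alexandroff topology on \(X/{\sim}\) induced by \(\preceq_q\) is \(T_0\), I would show directly that any two distinct points can be separated by an open set containing exactly one of them. The natural candidates are the minimal open sets of the quotient topology. By the classical correspondence between preorders and Alexandroff topologies, the open sets of \(X/{\sim}\) are exactly the downward closed subsets for \(\preceq_q\). So, for each class \([x]\), I would set
\[
D([x]) := \{[z]\in X/{\sim} : [z]\preceq_q [x]\},
\]
the quotient analogue of \(P_P(x)\).

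The first step is to check that \(D([x])\) is open. The argument mirrors the proof that \(P_P(x)\in\tau_P\). If \([z]\in D([x])\) and \([w]\preceq_q [z]\), then \([w]\preceq_q [x]\) by the transitivity established in the preceding theorem. Hence \(D([x])\) is downward closed. It contains \([x]\) by reflexivity.

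Next, take \([x]\neq[y]\). By antisymmetry of \(\preceq_q\), which is the preceding theorem, we cannot have both \([x]\preceq_q[y]\) and \([y]\preceq_q[x]\). Without loss of generality, \([x]\not\preceq_q[y]\). Then \([x]\notin D([y])\), while \([y]\in D([y])\). So \(D([y])\) is an open set containing \([y]\) but not \([x]\), which is exactly the \(T_0\) separation.

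I expect no real obstacle here. The only point needing care is the second step: \(T_0\) is precisely where antisymmetry enters. Equivalently, the specialization preorder of an Alexandroff space is the defining preorder, and a space is \(T_0\) iff its specialization preorder is antisymmetric. I would mention this as a remark, but keep the direct separation argument as the proof, since it uses only results already stated in the paper.
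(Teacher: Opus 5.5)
Your proof is correct and rests on the same idea as the paper's: the paper simply cites the classical fact that an Alexandroff topology is \(T_0\) exactly when its preorder is antisymmetric, and then invokes the antisymmetry of \(\preceq_q\). You prove the needed direction of that fact directly, using the principal down-sets \(D([x])\) as separating open sets, so your argument is self-contained but otherwise the same reasoning.
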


\begin{proof}
It is a classical fact that an Alexandroff topology is \(T_0\) if and only if its associated preorder is antisymmetric. Since \(\preceq_q\) is a partial order on \(X/{\sim}\), it is antisymmetric. Therefore, the Alexandroff topology induced by \(\preceq_q\) is \(T_0\).
\end{proof}

\bigskip
Let $\displaystyle{q:X\to X/{\sim}}$ be the canonical projection onto the communication quotient, defined by
\[
q(x)=[x],
\qquad x\in X.
\]
Given a Markov chain \((X_n)_{n\geq0}\) on \(X\), we consider the projected process
\[
Y_n:=q(X_n)=[X_n],
\qquad n\geq0.
\]
Thus, \(Y_n\) records the communication class in which the chain \(X_n\) lies at time \(n\). In this way, the original dynamics on the state space \(X\) induce a process on the quotient space \(X/{\sim}\). 

\bigskip

The quotient construction naturally raises the following question: does the projected process \((Y_n)_{n\geq0}\) define a Markov chain on the communication quotient \(X/{\sim}\)? In general, this is not the case, since the transition probabilities between communication classes may depend on the particular representative of the initial class. The appropriate condition ensuring that the projected process inherits the Markov property is the well-known notion of lumpability.

\bigskip

\begin{definition}
A discrete-time Markov chain \((X,P)\) is said to be \emph{lumpable} with respect to the communication partition if
\[
P(x,[z])=P(y,[z]),
\qquad
\forall\,x,y\in[w],\quad \forall\,[w],[z]\in X/{\sim},
\]
\end{definition}

\bigskip

\begin{theorem}
Let \((X_n)_{n\geq0}\) be a discrete-time Markov chain with transition matrix \(P\), and suppose that the chain is lumpable with respect to the partition of \(X\) into communication classes. Then the following statements hold.

\begin{enumerate}
    \item The process \((Y_n)_{n\geq0}\) is a discrete-time Markov chain on \(X/{\sim}\).

    \item Its transition matrix is given by
    \[
    \overline{P}([x],[y])
    =
    P(x,[y]),
    \]

    where
    \[
    P(x,[y])
    =
    \sum_{z\in[y]}P(x,z).
    \]
   
    \item The accessibility relation of the quotient chain coincides with the partial order \(\preceq_q\).

    \item the Markov accessibility topology associated with the quotient transition matrix \(\overline P\) coincides with the Alexandroff topology induced by the partial order \(\preceq_q\).
\end{enumerate}
\end{theorem}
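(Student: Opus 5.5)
The plan is to prove the four items in order, with each one feeding the next. First I will check that $\overline P$ is a well-defined stochastic matrix on $X/{\sim}$. By lumpability, $P(x,[y])$ depends only on the class $[x]$, so $\overline P([x],[y]):=P(x,[y])$ is independent of representatives. Since the classes partition $X$, we have $\sum_{[y]}\overline P([x],[y])=\sum_{z\in X}P(x,z)=1$.

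For item 1 (and with it item 2), I will follow the classical Kemeny--Snell argument. Fix classes $C_0,\dots,C_n,C$ and an initial distribution $\mu$ with $\mathbb P_\mu(Y_0=C_0,\dots,Y_n=C_n)>0$. The event $\{Y_0=C_0,\dots,Y_n=C_n,Y_{n+1}=C\}$ is a disjoint union over $x_0\in C_0,\dots,x_n\in C_n$ of the events $\{X_0=x_0,\dots,X_n=x_n,X_{n+1}\in C\}$. The Markov property of $(X_n)$ gives each such event probability
\[
\mathbb P_\mu(X_0=x_0,\dots,X_n=x_n)\,P(x_n,C).
\]
By lumpability, $P(x_n,C)=\overline P(C_n,C)$ for all $x_n\in C_n$, so this factor comes out of the sum. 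Summing yields
\[
\mathbb P_\mu(Y_0=C_0,\dots,Y_{n+1}=C)=\mathbb P_\mu(Y_0=C_0,\dots,Y_n=C_n)\,\overline P(C_n,C).
\]
This is exactly the Markov property of $(Y_n)$ with transition matrix $\overline P$. The only care needed is to restrict to conditioning events of positive probability; I regard this bookkeeping, rather than any real difficulty, as the main technical point of item 1.

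For item 3, the key intermediate claim is that $\overline P^{\,n}([x],[y])=P^n(x,[y])$ for all $n\in\mathbb N_0$. I will prove it by induction on $n$. The case $n=0$ is trivial, since $P^0(x,[y])=\mathbf 1_{[x]=[y]}$. For the inductive step, Chapman--Kolmogorov and a regrouping of $w\in X$ by classes give
\[
P^{n+1}(x,[y])=\sum_{[w]}\sum_{w'\in[w]}P(x,w')P^n(w',[y]).
\]
By the induction hypothesis $P^n(w',[y])=\overline P^{\,n}([w],[y])$, which is constant on $[w]$. Hence the right-hand side equals $\sum_{[w]}\overline P([x],[w])\overline P^{\,n}([w],[y])=\overline P^{\,n+1}([x],[y])$. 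With this identity, suppose $[y]$ is accessible from $[x]$ in the quotient chain. Then $P^n(x,[y])>0$ for some $n$, so $P^n(x,z)>0$ for some $z\sim y$. This gives $x\preceq z\preceq y$, hence $x\preceq y$ by transitivity (Theorem~\ref{T1}), that is, $[x]\preceq_q[y]$. Conversely, if $x\preceq y$, then $P^n(x,y)>0$ for some $n$, so $\overline P^{\,n}([x],[y])\ge P^n(x,y)>0$.

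Item 4 then follows immediately. Applying Theorem~\ref{T1} to the Markov chain $(X/{\sim},\overline P)$ shows that $\tau_{\overline P}$ is the Alexandroff topology of the quotient accessibility relation. By item 3, that relation is $\preceq_q$, so $\tau_{\overline P}$ is the Alexandroff topology induced by $\preceq_q$, which is $T_0$ by the preceding corollary. The step I expect to require the most attention is the induction identity in item 3, since it is the only place where lumpability must be used iteratively rather than just once.
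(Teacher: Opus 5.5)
Your proposal is correct and follows the same outline as the paper's proof: well-definedness and stochasticity of $\overline P$, the Kemeny--Snell computation for the Markov property, the two directions of accessibility, and then Theorem~\ref{T1} applied to $(X/{\sim},\overline P)$. Your identity $\overline P^{\,n}([x],[y])=P^n(x,[y])$ makes rigorous what the paper only asserts informally about lifting $n$-step paths; it also follows directly from item 1 by taking $\mu=\delta_x$ and computing $\mathbb P_{\delta_x}(Y_n=[y])$ in two ways.
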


\begin{proof}
First, the lumpability condition ensures that the quantity
\[
\overline P([x],[y]) := P(x,[y])
\]
is well defined. Indeed, if \(x'\in[x]\), then by lumpability,
\[
P(x,[y])=P(x',[y]).
\]
Moreover,
\[
\overline P([x],[y])\geq 0
\]
and
\[
\sum_{[y]\in X/{\sim}}\overline P([x],[y])
=
\sum_{[y]\in X/{\sim}}P(x,[y])
=
\sum_{z\in X}P(x,z)
=
1.
\]
Hence \(\overline P\) is a stochastic matrix on \(X/{\sim}\). Now let
\[
Y_n=q(X_n)=[X_n].
\]
For communication classes \([x]\) and \([y]\), we have
\[
\mathbb P(Y_{n+1}=[y]\mid Y_n=[x])
=
\mathbb P(X_{n+1}\in[y]\mid X_n\in[x]).
\]
By the lumpability condition, the total probability of moving from the class \([x]\) to the class \([y]\) does not depend on the representative \(x\in[x]\). Therefore,
\[
\mathbb P(Y_{n+1}=[y]\mid Y_n=[x])
=
\overline P([x],[y]).
\]
More generally, conditioning on the whole past of \(Y\), the same argument gives
\[
\mathbb P(Y_{n+1}=[y]\mid Y_n,[Y_{n-1}],\ldots,Y_0)
=
\overline P(Y_n,[y]).
\]
Thus \((Y_n)_{n\geq0}\) is a discrete-time Markov chain on \(X/{\sim}\) with transition matrix \(\overline P\). It remains to identify its accessibility relation. Suppose first that
\[
[x]\preceq_q [y].
\]
By definition, this means that $\displaystyle{x\preceq y}$. Hence there exists \(n\in\mathbb N_0\) such that
\[
P^n(x,y)>0.
\]
Therefore, starting from the class \([x]\), the projected chain can reach the class \([y]\) in \(n\) steps with positive probability. Thus \([y]\) is accessible from \([x]\) for the quotient chain. Conversely, suppose that \([y]\) is accessible from \([x]\) for the quotient chain. Then there exists \(n\in\mathbb N_0\) such that
\[
\overline P^{\,n}([x],[y])>0.
\]
This means that the projected chain can move from \([x]\) to \([y]\) with positive probability in \(n\) steps. Hence there exist representatives along such a path in the original chain, and consequently some state in \([y]\) is accessible from some state in \([x]\). Since all states inside a communication class communicate with each other, it follows that $\displaystyle{x\preceq y}$. Therefore,
\[
[x]\preceq_q [y].
\]
Thus the accessibility relation of the quotient chain coincides with the partial order \(\preceq_q\). Finally, by Theorem~\ref{T1}, the Markov accessibility topology associated with the quotient transition matrix \(\overline P\) is the Alexandroff topology generated by the accessibility relation of the quotient chain. Since this accessibility relation is precisely \(\preceq_q\), the Markov accessibility topology generated by \(\overline P\) coincides with the canonical \(T_0\)-Alexandroff topology on \(X/{\sim}\) induced by the partial order \(\preceq_q\).
\end{proof}

\bigskip

\begin{example}
Consider the discrete-time Markov chain with state space
\[
X=\{1,2,3,4,5,6,7\},
\]
whose transition matrix is
\[
P=
\begin{pmatrix}
\frac12 & \frac12 & 0 & 0 & 0 & 0 & 0\\
\frac13 & \frac13 & \frac13 & 0 & 0 & 0 & 0\\
0 & 0 & \frac12 & \frac12 & 0 & 0 & 0\\
0 & 0 & 0 & \frac12 & \frac12 & 0 & 0\\
0 & 0 & 0 & \frac14 & \frac14 & \frac12 & 0\\
0 & 0 & 0 & 0 & 0 & \frac12 & \frac12\\
0 & 0 & 0 & 0 & 0 & \frac12 & \frac12
\end{pmatrix}.
\]
The communication classes of the chain are
\[
[1]=\{1,2\},\qquad
[3]=\{3\},\qquad
[4]=\{4,5\},\qquad
[6]=\{6,7\}.
\]
The accessibility relation induces the partial order
\[
[1]\preceq_q[3]\preceq_q[4]\preceq_q[6].
\]
Consequently, the communication quotient is the totally ordered set
\[
X/{\sim}
=
\bigl\{
[1],[3],[4],[6]
\bigr\},
\]
whose Hasse diagram is

\[
[1]
\longrightarrow
[3]
\longrightarrow
[4]
\longrightarrow
[6].
\]
The minimal open neighborhoods are given by
\[
P_P(1)=P_P(2)=\{1,2\},
\]

\[
P_P(3)=\{1,2,3\},
\]

\[
P_P(4)=P_P(5)=\{1,2,3,4,5\},
\]

and

\[
P_P(6)=P_P(7)=X.
\]

Hence, the Markov accessibility topology is

\[
\tau_P=
\{
\varnothing,
\{1,2\},
\{1,2,3\},
\{1,2,3,4,5\},
X
\}.
\]

Its closed sets are

\[
\varnothing,
\{6,7\},
\{4,5,6,7\},
\{3,4,5,6,7\},
X,
\]

which coincide with the absorbing subsets of the chain. Moreover, the partition into communication classes is lumpable. Therefore, the projected process $\displaystyle{Y_n=[X_n]}$ is itself a Markov chain on the quotient space \(X/{\sim}\), whose transition matrix is

\[
\overline P=
\begin{pmatrix}
\frac23 & \frac13 & 0 & 0\\
0 & \frac12 & \frac12 & 0\\
0 & 0 & \frac12 & \frac12\\
0 & 0 & 0 & 1
\end{pmatrix}.
\]

\end{example}

\section{Conclusions and Future Directions}

In this paper, we introduced the Markov Accessibility Topology, a canonical
topological structure associated with the accessibility relation of a
discrete-time Markov chain. By showing that this topology coincides with the
Alexandroff topology induced by the accessibility preorder, we established a
direct connection between probabilistic reachability, order theory, and
topology. Within this framework, several classical notions from Markov chain theory
admit natural topological interpretations. In particular, absorbing subsets
are precisely the closed sets of the Markov accessibility topology, while
irreducibility is equivalent to the triviality of the associated topology.
Moreover, minimal open neighborhoods encode the states from which a given
state is accessible, and they can be characterized in terms of positive
hitting probabilities. At the pathwise level, open sets correspond to regions
that cannot be revisited after they have been left. Finally, the quotient by
communication classes yields a canonical \(T_0\)-Alexandroff space whose
specialization order describes the global accessibility structure of the
chain. Under suitable lumpability assumptions, this quotient also supports a
well-defined Markov dynamics.

These results show that the accessibility relation is not merely an auxiliary
notion used to describe possible transitions. Rather, it determines an
intrinsic topological structure that organizes several qualitative properties
of the underlying Markov chain. The present work therefore provides a starting
point for a broader investigation of the interaction between stochastic
dynamics, order-theoretic topology, and lattice theory. Two distinct research directions naturally emerge from this construction. The first concerns the separation properties and order-theoretic invariants of
the Markov Accessibility Topology. Several separation axioms lying between
\(T_0\) and \(T_1\), such as \(T_D\), \(T_{1/2}\), and other generalized
separation conditions, admit formulations in terms of the specialization
preorder of a topological space. Since, in the present setting, this preorder
is precisely the accessibility relation, it is natural to seek probabilistic
characterizations of these axioms in terms of communication classes,
absorbing states, accessibility between distinct states, and the structure of
the communication quotient. It would also be interesting to investigate
order-theoretic invariants such as the height, width, and maximal chains of the
quotient partially ordered set. Probabilistically, these invariants may
describe the number and arrangement of successive communication classes that
can be reached in a strictly one-directional manner. This direction may lead
to a systematic classification of Markov chains according to the separation
and order properties of their accessibility topologies.

A second, independent direction concerns complements of the Markov
Accessibility Topology in the lattice of topologies. The existence and
structure of complements and maximal complements of preorders and topologies
have been studied from a lattice-theoretic perspective; see, for instance,
\cite{Uzcategui}. Since the topology introduced in this paper is generated by
an accessibility preorder, it is natural to ask whether a complement of this
topology can itself be realized as the accessibility topology of another
Markov chain on the same state space. This raises the possibility of defining
complementary Markov dynamics, namely pairs of chains whose accessibility
structures are complementary in an appropriate lattice-theoretic sense.
Relevant questions include the existence and nonuniqueness of such
complementary chains, the realization of maximal complements by transition
matrices, and the relationships between the communication classes, absorbing
subsets, and order invariants of the two dynamics. The study of these
complementary structures may provide a new connection between Markov chain
theory and the lattice theory of preorders and topologies.

\newpage


\end{document}